\documentclass[12pt, twoside]{article}
\setlength{\textwidth}{160mm}
\setlength{\textheight}{229mm}    
\setlength{\oddsidemargin}{0mm}
\setlength{\evensidemargin}{0mm}
\setlength{\topmargin}{-5mm}
\setlength{\skip\footins}{6mm plus 2mm}
%
\usepackage{times}
\usepackage{bm}
\usepackage{graphics}
\usepackage{graphicx}
\usepackage{amsmath}
\usepackage{latexsym}
\usepackage{amssymb,mathrsfs}
\usepackage{subfigure}
\usepackage[flushmargin]{footmisc}

%
\newtheorem{thm}{Theorem}[section]
\newtheorem{lem}{Lemma}[section]
\newtheorem{prop}{Proposition}[section]

\newtheorem{cond}{Condition}[section]{\bf}{\rm}

{\bf}{\rm}
\newtheorem{assumpt}{Assumption}[section]{\bf}{\rm}

\newtheorem{rem}{Remark}[section]{\itshape}{\rmfamily}

\newenvironment{proof}{\noindent{\it Proof.~~}}{\qed\medskip}
\newcommand{\qed}{\hspace*{\fill}$\Box$}
%
\makeatletter
\def\eqnarray{\stepcounter{equation}\let\@currentlabel=\theequation
\global\@eqnswtrue
\global\@eqcnt\z@\tabskip\@centering\let\\=\@eqncr
$$\halign to \displaywidth\bgroup\@eqnsel\hskip\@centering
  $\displaystyle\tabskip\z@{##}$&\global\@eqcnt\@ne 
  \hfil$\;{##}\;$\hfil
  &\global\@eqcnt\tw@ $\displaystyle\tabskip\z@{##}$\hfil 
   \tabskip\@centering&\llap{##}\tabskip\z@\cr}
\makeatother
\makeatletter
    \renewcommand{\theequation}{%
    \thesection.\arabic{equation}}
    \@addtoreset{equation}{section}
  \makeatother

\newcommand{\dm}{\displaystyle}
\newcommand{\vc}{\bm}



\def\pasub#1{\hspace{0.1em}{}_{(#1)}\hspace{-0.05em}}
\def\sqsub#1{\hspace{0.1em}{}_{[#1]}\hspace{-0.05em}}
\def\lsub#1{\hspace{0.1em}{}_{#1}\hspace{-0.05em}}
\newcommand{\ol}{\overline}

\newcommand{\wt}{\widetilde}


\newcommand{\EE}{\mathsf{E}}
\newcommand{\PP}{\mathsf{P}}

\newcommand{\bbA}{\mathbb{A}}
\newcommand{\bbC}{\mathbb{C}}

\newcommand{\bbE}{\mathbb{E}}

\newcommand{\bbK}{\mathbb{K}}

\newcommand{\bbN}{\mathbb{N}}
\newcommand{\bbR}{\mathbb{R}}
\newcommand{\bbS}{\mathbb{S}}
\newcommand{\bbU}{\mathbb{U}}

\newcommand{\bbZ}{\mathbb{Z}}


\newcommand{\rmd}{{\rm d}}
\newcommand{\rme}{{\rm e}}


\renewcommand{\labelenumi}{(\alph{enumi})}
\newcommand{\dd}[1]{\if#11 1\!\!1 
\else {\if#1C I\!\!\!C
\else {\if#1G I\!\!\!G 
\else {\if#1J J\!\!\!J 
\else {\if#1S S\!\!\!S
\else {\if#1Z Z\!\!\!Z
\else {\if#1Q O\!\!\!\!Q
\else I\!\!#1
\fi}
\fi}
\fi}
\fi} 
\fi} 
\fi} 
\fi} 


\newcommand{\eqn}[1]{(\ref{eqn:#1})}
\newcommand{\lemt}[1]{Lemma~\ref{lem:#1}}

\newcommand{\asst}[1]{Assumption~\ref{ass:#1}}

\newcommand{\assn}[1]{\ref{ass:#1}}


\newcommand{\br}[1]{\langle #1 \rangle}

  \newcommand{\rmn}[1]{\if#11I\else {\if#12I\hspace{-0.12ex}I\hspace{-0.6ex}\else {\if #13I\hspace{-0.16ex}I\hspace{-0.16ex}I\hspace{-1.1ex}\else {\if #14I\hspace{-0.16ex}V\hspace{-1.6ex} \else V\hspace{-1.0ex}\fi}\fi} \fi} \fi}

\pagestyle{myheadings} 
\markboth{\small H. Masuyama et al.}
{Error Bounds for the QBD Approximation of 2D-RRWs}

\makeatother


\begin{document}\thispagestyle{plain} 

\hfill

{\Large{\bf
\begin{center}
Simple error bounds for the QBD approximation of a special class of two dimensional reflecting random walks
\end{center}
}
}

\begin{center}
{
Hiroyuki Masuyama%
${}^{1}
\footnote[2]{The research of the first author was supported by JSPS KAKENHI Grant No.~15K00034.}$, 
Yutaka Sakuma
${}^{2}
\footnote[3]{The research of the second author was supported by JSPS KAKENHI Grant No.~16K21704.
}$ 
and Masahiro Kobayashi${}^{3}$ 
}

\medskip

{\footnotesize
${}^1$%
Graduate School of Informatics, Kyoto University, Kyoto 606-8501, Japan

${}^2$%
School of Electrical and Computer Engineering, National Defense Academy, Kanagawa 239-8686, Japan

${}^3$%
School of Science, Tokai University Kanagawa 259-1292, Japan
}
\bigskip
\medskip

{\small
\textbf{Abstract}

\medskip

\begin{tabular}{p{0.85\textwidth}}
This paper considers the QBD approximation of a special class of
two-dimensional reflecting random walks (2D-RRWs). A typical example
of the 2D-RRWs is a two-node Jackson network with cooperative
servers. The main contribution of this paper is to provide simple
upper bounds for the relative absolute difference between the
time-averaged functionals of the original 2D-RRW and its QBD
approximation.
\end{tabular}
}
\end{center}

\begin{center}
\begin{tabular}{p{0.90\textwidth}}
{\small
{\bf Keywords:} %
Two-dimensional reflecting random walk (2D-RRW);
Double QBD; 
QBD approximation;
Error bound;
Time-averaged functional;
Geometric ergodicity;
Two-node Jackson network with cooperative servers
%
%

\medskip

{\bf Mathematics Subject Classification:} %
60J27; 60J22; 60K25.
}
\end{tabular}

\end{center}

\section{Introduction}
\label{sec:introduction}

This paper considers the stationary distribution of a discrete-time
two dimensional reflecting random walk (2D-RRW) on the lattice quarter
plane. Such 2D-RRWs appear as the joint queue length processes of
two-node queueing systems and two-waiting-line queueing systems. Thus,
we can evaluate the long-run performance of these queueing systems
once we can obtain the stationary distributions of the corresponding
2D-RRWs.

Unfortunately, it is, in general, difficult to obtain a closed-form
expression of the stationary distribution of 2D-RRWs. This is
primarily why the tail asymptotics of 2D-RRWs and their
generalizations have been extensively studied (see
\cite{Boro01,Guil11,Koba13,Koba14,LiHui11,LiHui13,Miya09,Miya11,Miya15,Ozaw13}
and the references therein). These studies focus on identifying the
decay rate of the stationary distribution, though they make a limited
contribution to the performance evaluation of the queueing systems
mentioned above.

On the other hand, for some special 2D-RRWs, the product-form solution
\cite{Lato14}, the mixed-geometric-form solution \cite{Chen15} and the
partially geometric solution \cite{Koba15} of the stationary
distribution are derived.  Although these solutions are tractable and
useful, they require restrictive conditions. Thus, the literature
\cite{Chen15,Gose14,Lato14} discussed the approximations such that a
2D-RRW is perturbed to another 2D-RRW having the stationary
distribution in product form or mixed-geometric form. The studies
\cite{Chen15,Gose14} also proposed the linear programming method for
establishing error bounds for the linear time-averaged functionals of
2D-RRWs, such as the mean value of the stationary distribution. This
linear programming method produces an error bound as a solution of the
linear program, and therefore the obtained bound is not explicit.

In fact, it is suggested in \cite{Lato14} that the QBD approximation
of 2D-RRWs yields very exact results when the truncation point of the
coordinate is sufficiently large. Note that the QBD approximation is
such that a 2D-RRW is reduced to a quasi-birth-and-death process (QBD)
by truncating one of the two coordinates of the original
2D-RRW. Motivated by this suggestion, we focus on developing computable error
bounds for the QBD approximation of 2D-RRWs.

In this paper, we assume some conditions on the mean drifts of the
2D-RRW. Under the conditions, we establish a geometric
(Foster-Lyapunov) drift condition on the transition probability matrix
of the 2D-RRW. Using the geometric drift condition and the upper bound for the deviation matrix (see, e.g., \cite{Cool02}) of the 2D-RRW, we develop a relative error bound for the approximate time-averaged functional obtained by the QBD approximation.  The error bound includes
the stationary distribution of the QBD approximation, which can be
readily computed by matrix analytic methods (see, e.g.,
\cite{Lato99}). Thus, the error bound is also computable. In addition,
from the error bound, we derive another bound by removing the
stationary distribution of the QBD approximation. The second error
bound is weaker but simpler than the first one.

\section{Preliminaries}
\label{sec:reflecting random walk}

In this section, we first introduce the two dimensional reflecting
random walk (2D-RRW) and its stability condition. We then describe the  technical conditions used to develop our error bounds, while discussing the
moment generating functions of the increments of the 2D-RRW. Finally, we
establish the geometric drift condition on the 2D-RRW.

\subsection{Two dimensional reflecting random walk}

Let $\{\vc{Z}(\ell) :=(Z_1(\ell), Z_2(\ell));\ell \in \bbZ_+\}$ denote
a two-dimensional Markov chain with state space $\bbS:=\bbZ_+^2=\bbZ_+
\times \bbZ_+$, where $\bbZ_+ = \{0,1,2,\dots\}$. For
$\vc{n}:=(n_1,n_2) \in \bbS$ and $\vc{m}:=(m_1,m_2) \in \bbS$, let
$p(\vc{n};\vc{m})$ denote
\begin{equation}
p(\vc{n};\vc{m}) = \PP(\vc{Z}(\ell+1) = \vc{m} \mid \vc{Z}(\ell) = \vc{n}),
\qquad \ell \in \bbZ_+.
\label{defn-p(n;m)}
\end{equation}

To describe the behavior of $\{\vc{Z}(\ell)\}$, we introduce some
definitions and notation. Let $\bbR = (-\infty,\infty)$, 
$\bbZ=\{0,\pm1,\pm2,\dots\}$ and $\bbN=\{1,2,\dots\}$. Furthermore, let $\bbE = \{1,2\}$ and $2^{\bbE}$ denote the power set of $\bbE$, i.e., $2^{\bbE} =
\{\emptyset, \{1\},\{2\},\bbE\}$. We then define $\bbS^{\bbA}$'s, $\bbA \in 2^{\bbE}$, as disjoint subsets of $\bbZ_+^2$ such that
\[
\bbS^{\emptyset} =\{(0,0)\}, \quad
\bbS^{\{1\}} = \bbN \times \{0\}, \quad
\bbS^{\{2\}} = \{0\} \times \bbN, \quad
\bbS^{\bbE} = \bbN^2.
\]
Clearly, $\cup_{\bbA \in 2^{\bbE}} \bbS^{\bbA} =
\bbS$. We refer to $\bbS^{\bbE}$ and $\bbS
\setminus \bbS^{\bbE}$ as the {\it interior} and {\it boundary},
respectively, of the state space of $\bbS$. We also refer to
$\bbS^{\emptyset}$, $\bbS^{\{1\}}$ and $\bbS^{\{2\}}$ as the {\it
  boundary faces} of the state space $\bbS$.

For $\bbA \in 2^{\bbE}$, let $\vc{X}^{\bbA} :=
(X_{1}^{\bbA},X_{2}^{\bbA})$ denote a random vector in
$\bbZ^2$ such that $\PP(\vc{X}^{\bbA} \in
\bbU^{\bbA}) = 1$, where
\begin{align*}
\bbU^{\emptyset} &= \{0,1\} \times \{0,1\}, &
\bbU^{\{1\}} &= \{0,\pm 1\} \times \{0,1\}, 
\\
\bbU^{\{2\}} &= \{0,1\} \times \{0,\pm 1\}, &
\bbU^{\bbE} &= \{0,\pm 1\} \times \{0,\pm 1\}.
\end{align*}
Furthermore, let $\vc{X}^{\bbA}(\ell):=(X_{1}^{\bbA}(\ell),X_{2}^{\bbA}(\ell))$'s,
$\ell \in \bbZ_+$, denote independent copies of $\vc{X}^{\bbA}$. Thus, for all $\ell \in \bbZ_+$,
\begin{equation}
\PP(\vc{X}^{\bbA}(\ell) = \vc{m}) 
= \PP(\vc{X}^{\bbA}= \vc{m}),
\qquad \vc{m} \in \bbZ^2.
\label{defn-X_l^{(k)}}
\end{equation}
We now assume that 
\begin{equation}
\vc{Z}(\ell + 1)
= \vc{Z}(\ell) + \sum_{\bbA \in 2^{\bbE}} 
\vc{X}^{\bbA}(\ell) I(\vc{Z}(\ell) \in \bbS^{\bbA}),
\qquad \ell \in \bbZ_+,
\label{defn-Z_l}
\end{equation}
where $I(\,\cdot\,)$ denotes the indicator function of the event
in the parentheses. We then define $p^{\bbA}$, $\bbA \in 2^{\bbE}$, as the distribution
function of $\vc{X}^{\bbA}$ such that $\sum_{\vc{m} \in \bbU^{\bbA}}
p^{\bbA}(\vc{m}) = 1$ and
\begin{eqnarray}
p^{\bbA}(\vc{m}) &=& \PP(\vc{X}^{\bbA} = \vc{m}),
\qquad \vc{m} \in \bbZ^2.
\label{defn-X^{(k)}}
\end{eqnarray}
It follows from (\ref{defn-X_l^{(k)}}), (\ref{defn-Z_l}) and
(\ref{defn-X^{(k)}}) that, for $\vc{n} \in \bbS^{\bbA}$ and $\bbA \in 2^{\bbE}$,%
\begin{eqnarray}
\PP(\vc{Z}(\ell + 1) = \vc{n}+\vc{m} \mid \vc{Z}(\ell) = \vc{n})
&=& p^{\bbA}(\vc{m}),\qquad \vc{m} \in \bbZ^2.
\label{eqn:transition-law}
\end{eqnarray}
It also follows from (\ref{defn-p(n;m)}),
(\ref{eqn:transition-law}) and $\sum_{\vc{m} \in \bbU^{\bbA}}
p^{\bbA}(\vc{m}) = 1$ that
\begin{equation}
p(\vc{n};\vc{m})
= p^{\bbA}(\vc{m}-\vc{n}),\quad 
\vc{n} \in \bbS^{\bbA},\ \vc{m}-\vc{n} \in \bbU^{\bbA},\ 
\bbA \in 2^{\bbE}.
\label{eqn-p(n,m)}
\end{equation}

In what follows, we refer to $\{\vc{Z}(\ell)\}$ described above as a
{\it two dimensional reflecting random walk} ({\it 2D-RRW}). We also
refer to $p^{\bbA}$ and $\{\vc{X}^{\bbA}(\ell)\}$ as the {\it
  transition law} and {\it increment}, respectively, in $\bbS^{\bbA}$.

\subsection{Stability condition}

In this subsection, we provide the summary of the known results on the
stability condition (ergodic condition) of the 2D-RRW $\{\vc{Z}(\ell)\}$.

For $\bbA \in 2_+^{\bbE}:=\{\{1\},\{2\},\bbE\}$, let
\begin{equation}
\vc{\mu}^{\bbA} 
:= (\mu_1^{\bbA},\mu_2^{\bbA}) 
= (\EE[X^{\bbA}_1],\EE[X^{\bbA}_2]).
\label{defn-mu}
\end{equation}
It follows from (\ref{defn-Z_l}) that $\vc{\mu}^{\bbA}$ is the vector
of the mean increments of the 2D-RRW $\{\vc{Z}(\ell)\}$ in
$\bbS^{\bbA}$. Thus, we call $\vc{\mu}^{\bbA}$ the {\it mean drift} in
$\bbS^{\bbA}$. By definition, $X_2^{\{1\}} \ge 0$ and $X_1^{\{2\}} \ge
0$ with probability one (w.p.1), which leads to
\begin{equation}
\mu_2^{\{1\}} \ge 0, \qquad \mu_1^{\{2\}} \ge 0.
\label{ineqn-mu}
\end{equation}
In addition, for any two vectors $\vc{x}=(x_1,x_2)$ and
$\vc{y}=(y_1,y_2)$ in $\bbR^2$, let 
\[
\vc{x} \wedge \vc{y} = x_1 y_2 - x_2 y_1.
\]
Note here that $\vc{x} \wedge \vc{y}$ is equivalent to the third
element of the cross product of two vectors $(\vc{x},0)$ and
$(\vc{y},0)$ in $\bbR^3$.  Therefore, $\vc{x} \wedge \vc{y} > 0$
(resp.~$\vc{x} \wedge \vc{y} < 0$) if and only if the direction angle
of vector $\vc{y}$ from vector $\vc{x}$ is in the range $(0,\pi)$
(resp.~$(-\pi,0)$), where the positive direction is counterclockwise.

In the rest of this paper, we assume that the 2D-RRW
$\{\vc{Z}(\ell)\}$ is irreducible and aperiodic. We also assume the
following stability condition of the 2D-RRW $\{\vc{Z}(\ell)\}$.
\begin{assumpt}[Stability condition]\label{ass:stability} 
Either of the following is satisfied:
\begin{enumerate}
\item $\mu_{1}^{\bbE} < 0$, $\mu_{2}^{\bbE} < 0$,
  $\vc{\mu}^{\bbE} \wedge \vc{\mu}^{\{1\}} < 0$ and
  $\vc{\mu}^{\bbE} \wedge \vc{\mu}^{\{2\}} > 0$.
\item $\mu_{1}^{\bbE} \ge 0$, $\mu_{2}^{\bbE} < 0$ and 
$\vc{\mu}^{\bbE} \wedge \vc{\mu}^{\{1\}} < 0$. 
In addition, $\mu^{\{2\}}_{2} < 0$ if $\mu^{\{2\}}_{1} = 0$.
\item $\mu_{1}^{\bbE} < 0$, $\mu_{2}^{\bbE} \ge 0$ and 
$\vc{\mu}^{\bbE} \wedge \vc{\mu}^{\{2\}} > 0$.  
In addition, $\mu^{\{1\}}_{1} < 0$ if $\mu^{\{1\}}_{2} = 0$.
\end{enumerate}
\end{assumpt}

It is known (see, e.g., \cite{Koba13}) that if
Assumption~\ref{ass:stability} holds then the 2D-RRW
$\{\vc{Z}(\ell)\}$ has the unique stationary distribution, denoted by
$\vc{\pi}:=(\pi(n_{1},n_{2}))_{(n_1,n_2) \in \bbS}$. The geometric
interpretation of this stability condition is summarized in
Figs.~\ref{fig:stability-a}, \ref{fig:stability-b} and \ref{fig:stability-c}.
\begin{figure}[h]
\centering
\subfigure[Case (a)]{
\includegraphics[scale=0.34]{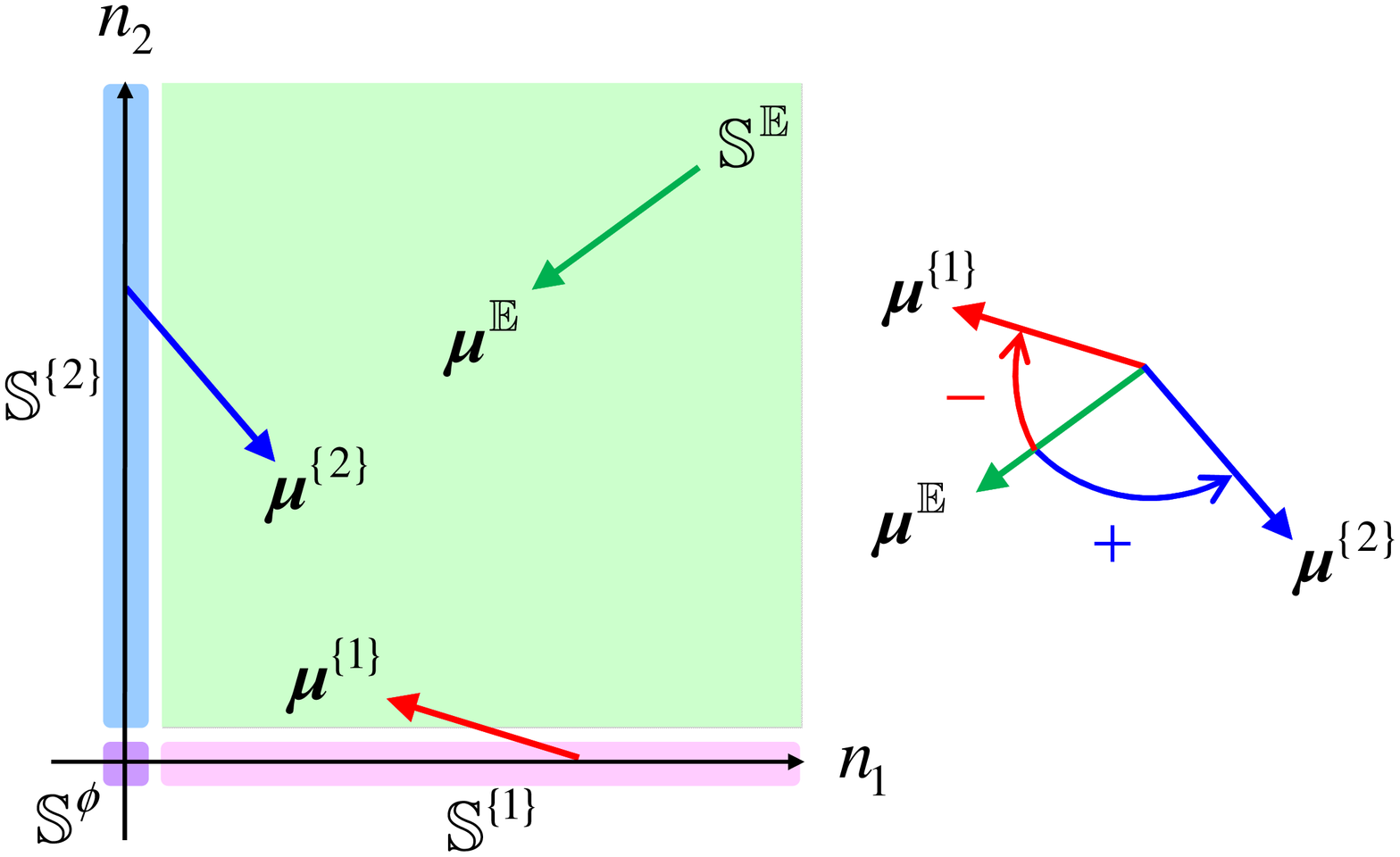}
\label{fig:stability-a}
}
\hspace{0mm}
\subfigure[Case (b)]{
\includegraphics[scale=0.34]{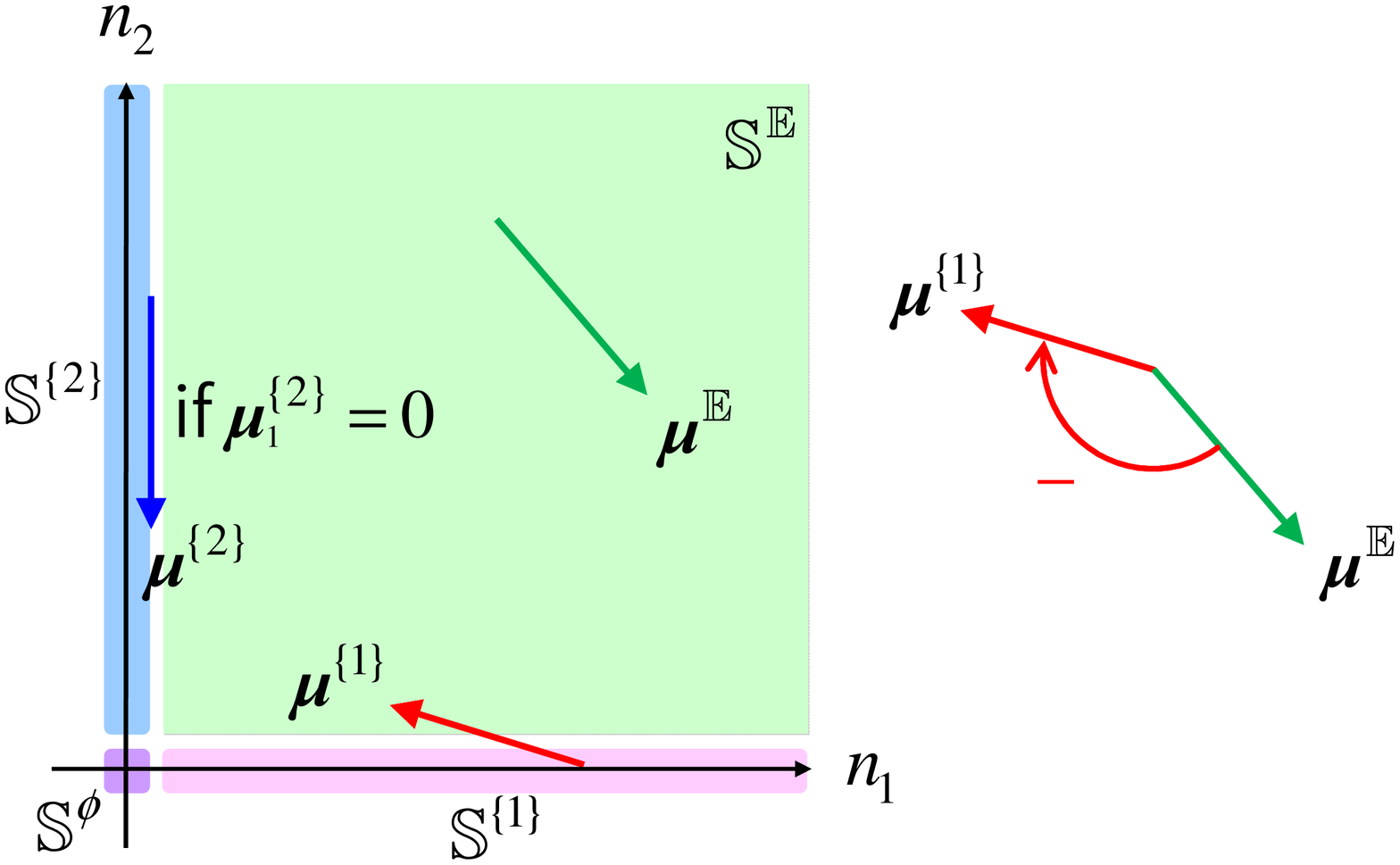}
\label{fig:stability-b}
}
\\
\subfigure[Case (c)]{
\includegraphics[scale=0.34]{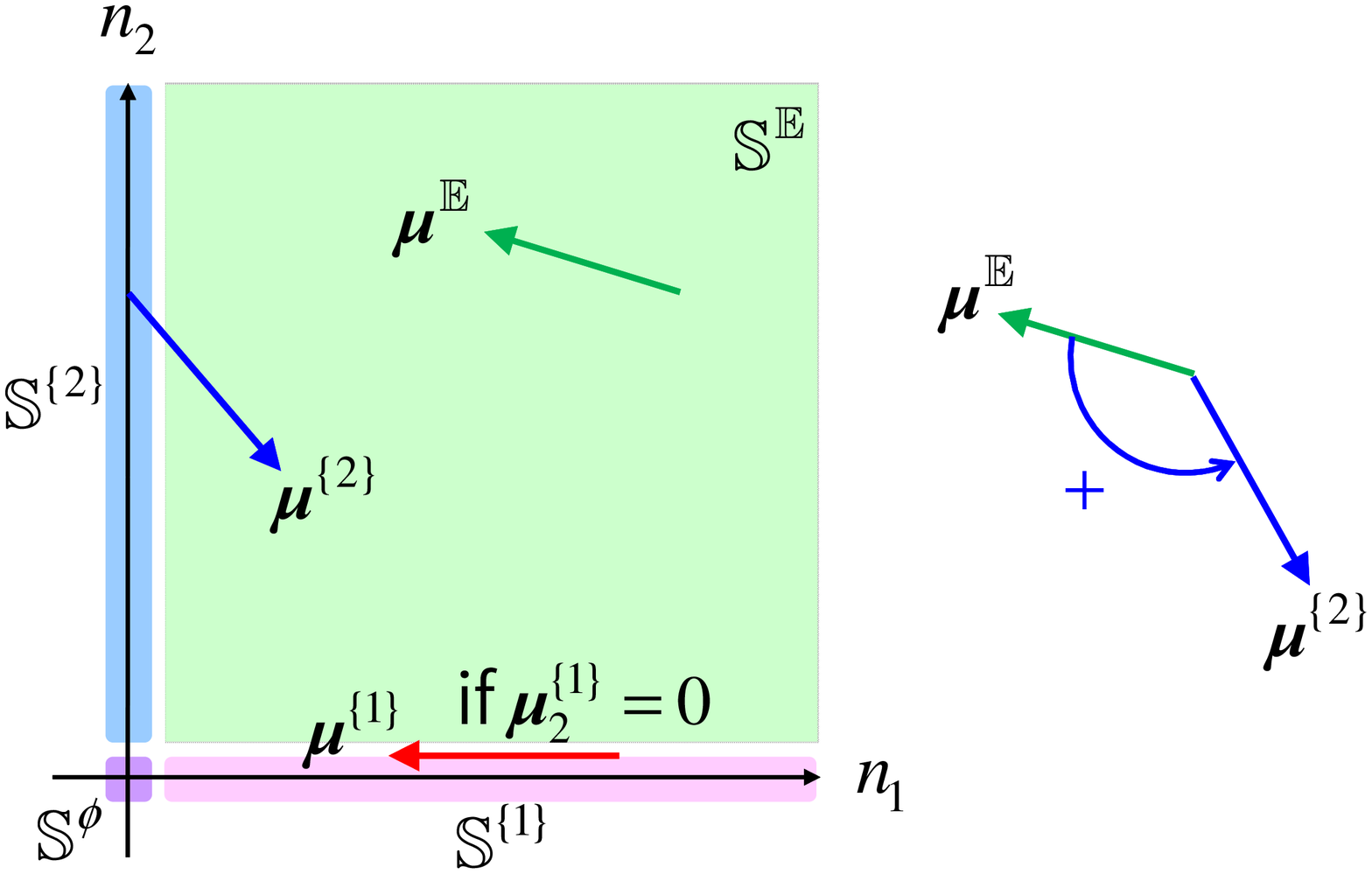}
\label{fig:stability-c}
}
\caption{Stability condition}
\end{figure}
\begin{rem}
Provided $\vc{\mu}^{\bbE} \neq \vc{0}$, Assumption~\ref{ass:stability}
holds if and only if the 2D-RRW $\{\vc{Z}(\ell)\}$ has the unique
stationary distribution. It is known that, even if $\vc{\mu}^{\bbE} =
\vc{0}$, the 2D-RRW $\{\vc{Z}(\ell)\}$ can be stable though its
stationary distribution must be heavy-tailed. For details, see
\cite{Fayo95, Koba14}.
\end{rem}

\subsection{Moment generating functions of increments}

In this subsection, we discuss the moment generating functions of the
increments $\vc{X}^{\bbA}$'s and describe the technical conditions used to
obtain the main results of this paper.

Let $\gamma^{\bbA}$, $\bbA \in 2_+^{\bbE}$, denote the moment
generating function of $\vc{X}^{\bbA}$, i.e.,
\begin{align}
\gamma^{\bbA}(\vc{\theta}) 
= \EE[\exp \{ \br{\vc{\theta},\vc{X}^{\bbA}} \}], \quad 
\vc{\theta} 
:= (\theta_{1},\theta_{2}) \in \bbR^{2}, 
\label{defn-gamma(theta)}
\end{align}
where $\br{\vc{x},\vc{y}}$ denotes the inner product of vectors
$\vc{x}$ and $\vc{y}$. From (\ref{defn-mu}) and
(\ref{defn-gamma(theta)}), we have
\begin{align}
\mu_{a}^{\bbA}
= \EE[X_{a}^{\bbA} ]
=
{\partial\gamma^{\bbA} \over \partial \theta_{a}} (\vc{0}), \qquad 
\bbA \in 2_+^{\bbE},\ a \in \{1,2\}. 
\label{eqn-mu_a^A}
\end{align}
Furthermore,  let
$\mathsf{\mathsf{\Gamma}}^{\bbA}$ and $\partial \mathsf{\Gamma}^{\bbA}
$, $\bbA \in 2_+^{\bbE}$, denote
\begin{align}
\mathsf{\Gamma}^{\bbA} 
= \{\vc{\theta}; \gamma^{\bbA}(\vc{\theta}) < 1\}, 
\quad 
\partial \mathsf{\Gamma}^{\bbA} 
= \{\vc{\theta}; \gamma^{\bbA}(\vc{\theta}) = 1\},
\label{eqn:gamma<1}
\end{align}
respectively. 

Under Assumption~\ref{ass:stability}, we have the following propositions (see \cite[Remark~2 and Lemma~2]{Koba14}).
\begin{prop}
\label{pro:property-Gamma}
If Assumption~\ref{ass:stability} holds, then the following are true:
\begin{enumerate}
\renewcommand{\labelenumi}{(\Roman{enumi})}
\item For each $\bbA \in 2_+^{\bbE}$, $\mathsf{\Gamma}^{\bbA}$ is a
  convex set and $\vc{0} \in \partial \mathsf{\Gamma}^{\bbA}$.
\item $\mathsf{\Gamma}^{\bbE} \cap \bbR_{>0}^2 \neq \emptyset $, where
  $\bbR_{>0} = \{x \in \mathbb{R}: x > 0 \} = (0,\infty)$.
\end{enumerate}
\end{prop}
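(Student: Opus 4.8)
The plan is to treat each increment's moment generating function as a finite, smooth, convex object, and then read off the two assertions from elementary convex analysis together with the sign conditions on the mean drift. First I would record the basic structure of $\gamma^{\bbA}$. Since $\PP(\vc{X}^{\bbA} \in \bbU^{\bbA}) = 1$ and each $\bbU^{\bbA}$ is a finite set, we have $\gamma^{\bbA}(\vc{\theta}) = \sum_{\vc{m} \in \bbU^{\bbA}} p^{\bbA}(\vc{m}) \exp\{\br{\vc{\theta},\vc{m}}\}$, a finite nonnegative combination of exponentials of linear functionals of $\vc{\theta}$. Consequently $\gamma^{\bbA}$ is finite and real-analytic (in particular $C^{\infty}$) on all of $\bbR^2$, and it is convex because each map $\vc{\theta} \mapsto \exp\{\br{\vc{\theta},\vc{m}}\}$ is convex and a nonnegative combination of convex functions is convex.

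For part (I), convexity of $\mathsf{\Gamma}^{\bbA}$ is immediate: it is the strict sublevel set $\{\vc{\theta} : \gamma^{\bbA}(\vc{\theta}) < 1\}$ of the convex function $\gamma^{\bbA}$, and strict sublevel sets of convex functions are convex. The statement $\vc{0} \in \partial \mathsf{\Gamma}^{\bbA}$ amounts to $\gamma^{\bbA}(\vc{0}) = 1$, which holds because $\gamma^{\bbA}(\vc{0}) = \sum_{\vc{m} \in \bbU^{\bbA}} p^{\bbA}(\vc{m}) = 1$. This part uses neither smoothness nor the stability assumption.

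For part (II), I would use the first-order behaviour of $\gamma^{\bbE}$ at the origin. By smoothness and (\ref{eqn-mu_a^A}) we have $\nabla \gamma^{\bbE}(\vc{0}) = \vc{\mu}^{\bbE}$, so for any direction $\vc{d} = (d_1,d_2)$,
\[
\gamma^{\bbE}(t\vc{d}) = 1 + t\,\br{\vc{\mu}^{\bbE},\vc{d}} + o(t) \qquad (t \downarrow 0).
\]
Hence it suffices to exhibit $\vc{d} \in \bbR_{>0}^2$ with $\br{\vc{\mu}^{\bbE},\vc{d}} < 0$; for such a $\vc{d}$ we get $\gamma^{\bbE}(t\vc{d}) < 1$ for all sufficiently small $t > 0$, and $t\vc{d} \in \bbR_{>0}^2$, giving $\mathsf{\Gamma}^{\bbE} \cap \bbR_{>0}^2 \neq \emptyset$. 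The existence of such a $\vc{d}$ follows by inspecting the three cases of Assumption~\ref{ass:stability}: in case (a) both $\mu_1^{\bbE} < 0$ and $\mu_2^{\bbE} < 0$, so $\vc{d} = (1,1)$ works; in case (b) $\mu_2^{\bbE} < 0$ while $\mu_1^{\bbE} \ge 0$, so $\vc{d} = (\varepsilon,1)$ works for $\varepsilon > 0$ small enough that $\varepsilon\mu_1^{\bbE} + \mu_2^{\bbE} < 0$; and in case (c) $\mu_1^{\bbE} < 0$ while $\mu_2^{\bbE} \ge 0$, so symmetrically $\vc{d} = (1,\varepsilon)$ works. In every case at least one component of $\vc{\mu}^{\bbE}$ is strictly negative, which is exactly what makes the choice possible.

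The routine parts are part (I) and the Taylor expansion; the only place judgement enters is selecting the descent direction $\vc{d}$ in part (II), and even that is mechanical once one notes that the stability condition forces $\mu_1^{\bbE} < 0$ or $\mu_2^{\bbE} < 0$ (in particular $\vc{\mu}^{\bbE} \neq \vc{0}$). I therefore expect no genuine obstacle here; the only care needed is to verify the three-case split and to keep the chosen direction strictly inside the open positive orthant, so that $t\vc{d}$ remains in $\bbR_{>0}^2$.
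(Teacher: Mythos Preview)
Your argument is correct. The paper does not supply its own proof of this proposition; it simply cites Remark~2 and Lemma~2 of \cite{Koba14}. Your self-contained route---convexity of finite nonnegative combinations of exponentials for part~(I), and a first-order Taylor expansion at $\vc{0}$ together with the case split on Assumption~\ref{ass:stability} for part~(II)---is exactly the elementary argument one would expect, and the paper in fact uses the same Taylor-expansion device in its proof of the neighbouring Lemma~\ref{lem:geometric-property}.
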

\begin{prop}
\label{pro:lemma_2}
Assumption~\ref{ass:stability} holds if and only if, for each $a \in
\{1,2\}$, there exists some $\vc{\theta} \in \partial
\mathsf{\Gamma}^{\mathbb{E}} \cap \partial \mathsf{\Gamma}^{\{a\}}$
such that $\theta_{a} > 0$.
\end{prop}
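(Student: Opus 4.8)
The plan is to convert the purely drift-based inequalities of \asst{stability} into a statement about how two convex curves intersect, and then read off the equivalence geometrically. By the coordinate symmetry $1\leftrightarrow 2$ (which interchanges $\{1\}$ and $\{2\}$ and flips the sign of $\wedge$), it suffices to analyze the condition for $a=1$ and recombine. The starting point is \prot{property-Gamma}: each $\mathsf{\Gamma}^{\bbA}$ is convex with $\vc{0}\in\partial\mathsf{\Gamma}^{\bbA}$, since $\gamma^{\bbA}$ is convex and $\gamma^{\bbA}(\vc{0})=\sum_{\vc{m}}p^{\bbA}(\vc{m})=1$. Moreover, by the identity $\mu_a^{\bbA}=(\partial\gamma^{\bbA}/\partial\theta_a)(\vc{0})$ noted above, $\nabla\gamma^{\bbA}(\vc{0})=\vc{\mu}^{\bbA}$; hence whenever $\vc{\mu}^{\bbA}\neq\vc{0}$ the tangent of $\partial\mathsf{\Gamma}^{\bbA}$ at $\vc{0}$ is orthogonal to $\vc{\mu}^{\bbA}$, and from the expansion $\gamma^{\bbA}(\vc{\theta})=1+\br{\vc{\mu}^{\bbA},\vc{\theta}}+o(|\vc{\theta}|)$ the region $\mathsf{\Gamma}^{\bbA}$ lies locally on the side $\br{\vc{\mu}^{\bbA},\vc{\theta}}<0$.

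Next I would pin down the global shapes of the two curves. Because the interior increment $\vc{X}^{\bbE}$ takes values of both signs in each coordinate (a consequence of irreducibility), $\gamma^{\bbE}$ is finite and grows to $+\infty$ in every direction, so $\mathsf{\Gamma}^{\bbE}$ is bounded and $\partial\mathsf{\Gamma}^{\bbE}$ is a closed strictly convex curve (an ``egg'') through $\vc{0}$. In contrast, since $X_2^{\{1\}}\ge 0$ w.p.1, $\gamma^{\{1\}}(\theta_1,\theta_2)$ stays bounded as $\theta_2\to-\infty$, so $\mathsf{\Gamma}^{\{1\}}$ is unbounded in the $-\theta_2$ direction and $\partial\mathsf{\Gamma}^{\{1\}}$ is the graph of a concave ``cap'' $\theta_2=\overline\eta(\theta_1)$ with $\overline\eta(0)=0$; the analogous statement for $\{2\}$ exchanges the roles of the coordinates, using $X_1^{\{2\}}\ge0$ (equivalently $\mu_1^{\{2\}}\ge0$). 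Strict convexity forces $\partial\mathsf{\Gamma}^{\bbE}$ and $\partial\mathsf{\Gamma}^{\{1\}}$ to meet in at most two points, one always being $\vc{0}$, so the whole question is whether the second meeting point exists and has $\theta_1>0$.

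The crux is a single sign computation. For the egg tangent $\vc{t}=(-\mu_2^{\bbE},\mu_1^{\bbE})$ at $\vc{0}$, a direct calculation gives $\br{\vc{\mu}^{\{1\}},\vc{t}}=\vc{\mu}^{\bbE}\wedge\vc{\mu}^{\{1\}}$; when $\mu_2^{\bbE}<0$ this $\vc{t}$ points into $\theta_1>0$, so $\mathrm{sgn}(\vc{\mu}^{\bbE}\wedge\vc{\mu}^{\{1\}})$ is exactly the sign of the directional derivative of $\gamma^{\{1\}}$ along the egg as $\theta_1$ increases from $0$: a negative sign means the egg drops below the cap immediately to the right of $\vc{0}$, and then boundedness of the egg against the concave cap forces a single re-crossing at some $\theta_1>0$. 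The sign of $\mu_1^{\bbE}$ fixes the orientation of $\vc{t}$ and, via \prot{property-Gamma}(II), controls whether the egg reaches $\theta_1>0$ at all. The symmetric computation for $a=2$ yields $\vc{\mu}^{\bbE}\wedge\vc{\mu}^{\{2\}}$ with the opposite orientation sign, explaining why the $a=1$ condition calls for a negative cross product while $a=2$ calls for a positive one. I would then verify, over the finitely many sign patterns of $(\mu_1^{\bbE},\mu_2^{\bbE})$, that ``both second crossings exist with $\theta_a>0$'' reproduces exactly the trichotomy (a)--(c); the patterns with $\mu_a^{\bbE}\ge0$ are precisely those in which one cross-product inequality becomes redundant, which is reflected in cases (b) and (c) each carrying a single $\wedge$-condition together with a tie-break clause.

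I expect the main obstacle to be the degenerate and boundary situations rather than the generic transversal crossing. When a drift component vanishes---for instance $\mu_1^{\bbE}=0$ at the interface of (a) and (b), or $\mu_2^{\{1\}}=0$, or the tie-break $\mu_1^{\{2\}}=0$ in case (b) that forces $\mu_2^{\{2\}}<0$---the crossing at $\vc{0}$ is tangential and the first-order argument is inconclusive, so one must resolve the local geometry by a second-order expansion of $\gamma^{\bbE}$ and $\gamma^{\{a\}}$ at $\vc{0}$, together with the explicit support constraints on the increments. The same care is needed to guarantee the strict inequality $\theta_a>0$ (ruling out the spurious solution $\vc{0}$ itself) and to run the ``only if'' direction, which I would treat by contraposition: if the sign pattern of the drifts falls outside (a)--(c), then for at least one $a$ the second intersection either fails to exist or is pushed to $\theta_a\le0$.
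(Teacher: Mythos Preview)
The paper does not actually prove this proposition. It is stated as a known result, attributed to \cite[Lemma~2]{Koba14} (see the sentence preceding Propositions~\ref{pro:property-Gamma} and~\ref{pro:lemma_2}, and Remark~2.2 noting a typo in the original). So there is no ``paper's own proof'' to compare against; your proposal is a reconstruction of the argument from the cited reference.

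That said, your geometric strategy---treating $\partial\mathsf{\Gamma}^{\bbE}$ as a bounded convex ``egg'' through $\vc{0}$, $\partial\mathsf{\Gamma}^{\{a\}}$ as an unbounded concave cap through $\vc{0}$, and reading the sign of $\vc{\mu}^{\bbE}\wedge\vc{\mu}^{\{a\}}$ as the relative orientation of their tangents at $\vc{0}$---is exactly the picture underlying the Kobayashi--Miyazawa analysis, and is the right framework. Two points deserve care if you want a complete proof. First, boundedness of $\mathsf{\Gamma}^{\bbE}$ and strict convexity of the level curves require that the support of $\vc{X}^{\bbE}$ is genuinely two-dimensional with both signs in each coordinate; you correctly invoke irreducibility for this, but the implication should be spelled out. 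Second, your crossing argument in the generic case is sound, but the degenerate cases you flag (tangential contact when a drift component vanishes, and the tie-break clauses in (b) and (c)) are where the work actually lies; the second-order expansion you propose is the right tool, and you should also verify that when $\mu_1^{\bbE}\ge 0$ in case~(b) the condition $\vc{\mu}^{\bbE}\wedge\vc{\mu}^{\{2\}}>0$ is indeed automatic (so that only the $\{1\}$ cross-product survives), which is why the statement in (b) drops it.
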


\begin{rem}
The symbol ``$\partial$" is dropped in the original statement of \cite[Lemma 2]{Koba14}. 
\end{rem}

We now present a geometric property of $\mathsf{\Gamma}^{\{1\}}$ and
$\mathsf{\Gamma}^{\{2\}}$.
\begin{lem}
\label{lem:geometric-property}
Suppose that Assumption~\ref{ass:stability} holds. For each $a \in
\{1,2\}$, $\mu_a^{\{a\}} < 0$ if and only if $\mathsf{\Gamma}^{\{a\}}
\cap \bbR_{>0}^2 \neq \emptyset$.
\end{lem}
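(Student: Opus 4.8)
The plan is to reduce this two-variable equivalence to a one-variable convexity argument along a coordinate axis, exploiting the one-sided support of one component of the increment $\vc{X}^{\{a\}}$. The two cases $a=1$ and $a=2$ are interchanged by swapping the coordinates, so I would treat $a=1$ in detail; here $\bbU^{\{1\}}=\{0,\pm1\}\times\{0,1\}$ forces $X_2^{\{1\}}\ge 0$ w.p.1. Because $\vc{X}^{\{1\}}$ has finite support, $\gamma^{\{1\}}$ is a finite sum of exponentials and hence smooth; in particular $\mathsf{\Gamma}^{\{1\}}$ is open, while $\gamma^{\{1\}}(\vc{0})=1$ and $\frac{\partial\gamma^{\{1\}}}{\partial\theta_1}(\vc{0})=\mu_1^{\{1\}}$ by (\ref{eqn-mu_a^A}).

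For the implication $\mathsf{\Gamma}^{\{1\}}\cap\bbR_{>0}^2\neq\emptyset \Rightarrow \mu_1^{\{1\}}<0$, I would first record that the sign constraint $X_2^{\{1\}}\ge 0$ yields
\[
\frac{\partial\gamma^{\{1\}}}{\partial\theta_2}(\vc{\theta})
=\EE\!\left[X_2^{\{1\}}\exp\{\br{\vc{\theta},\vc{X}^{\{1\}}}\}\right]\ge 0,
\]
so $\theta_2\mapsto\gamma^{\{1\}}(\theta_1,\theta_2)$ is nondecreasing. Choosing $(\theta_1,\theta_2)$ in the nonempty intersection, with $\theta_1,\theta_2>0$, monotonicity gives $\gamma^{\{1\}}(\theta_1,0)\le\gamma^{\{1\}}(\theta_1,\theta_2)<1$. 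The one-dimensional function $g(\theta_1):=\gamma^{\{1\}}(\theta_1,0)=\EE[\exp\{\theta_1 X_1^{\{1\}}\}]$ is convex with $g(0)=1$ and $g'(0)=\mu_1^{\{1\}}$, so the supporting-line bound $g(\theta_1)\ge g(0)+g'(0)\theta_1$ evaluated at this point (with $\theta_1>0$) forces $\mu_1^{\{1\}}=g'(0)<0$.

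For the converse $\mu_1^{\{1\}}<0 \Rightarrow \mathsf{\Gamma}^{\{1\}}\cap\bbR_{>0}^2\neq\emptyset$, I would use $g'(0)=\mu_1^{\{1\}}<0$ together with $g(0)=1$ to produce some $\theta_1^*>0$ with $\gamma^{\{1\}}(\theta_1^*,0)<1$, i.e.\ $(\theta_1^*,0)\in\mathsf{\Gamma}^{\{1\}}$. Since $\mathsf{\Gamma}^{\{1\}}$ is open, every sufficiently small perturbation $(\theta_1^*,\varepsilon)$ with $\varepsilon>0$ remains in $\mathsf{\Gamma}^{\{1\}}$ and hence lies in $\mathsf{\Gamma}^{\{1\}}\cap\bbR_{>0}^2$. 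This settles $a=1$, and $a=2$ follows verbatim after exchanging the roles of the two coordinates and using $X_1^{\{2\}}\ge 0$.

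The one genuinely load-bearing idea is the sign constraint $X_2^{\{1\}}\ge 0$ (respectively $X_1^{\{2\}}\ge 0$): it is what makes $\gamma^{\{1\}}$ monotone in $\theta_2$ and therefore lets me slide any witness point down to the $\theta_1$-axis without leaving $\mathsf{\Gamma}^{\{1\}}$. Without it the ``$\Leftarrow$'' implication would fail, since a point of $\mathsf{\Gamma}^{\{1\}}$ in the open positive quadrant could then coexist with $\mu_1^{\{1\}}\ge 0$. I expect the only mild subtlety to be resisting a naive two-variable Taylor expansion at $\vc{0}$ in the converse direction: because $\mu_2^{\{1\}}\ge 0$ the gradient need not point into the positive quadrant, so one must first descend along the axis and only then perturb into $\theta_2>0$, relying on openness rather than on the gradient.
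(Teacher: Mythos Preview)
Your proof is correct. The paper takes a closely related but slightly different route: it writes the two-variable Taylor expansion
\[
\gamma^{\{a\}}(\vc{\theta}) - 1 = \mu_1^{\{a\}}\theta_1 + \mu_2^{\{a\}}\theta_2 + o(\|\vc{\theta}\|^2)
\]
at the origin and reads off that $\mathsf{\Gamma}^{\{a\}}\cap\bbR_{>0}^2\neq\emptyset$ iff the linear form $\mu_1^{\{a\}}\theta_1+\mu_2^{\{a\}}\theta_2$ takes a negative value somewhere on $\bbR_{>0}^2$; combined with $\mu_{3-a}^{\{a\}}\ge 0$ this forces $\mu_a^{\{a\}}<0$. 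You instead exploit the stronger pointwise fact $X_{3-a}^{\{a\}}\ge 0$ to get monotonicity of $\gamma^{\{a\}}$ in $\theta_{3-a}$, slide any witness down to the coordinate axis, and then run a one-variable convexity (supporting-line) argument; for the converse you manufacture an axis point and perturb into the open quadrant by openness of $\mathsf{\Gamma}^{\{a\}}$. Both arguments ultimately pivot on the same one-sided support constraint; yours is a touch more elementary and makes the ``only if'' direction fully explicit, whereas the paper's version is more compact but leaves implicit the passage from an arbitrary witness in $\mathsf{\Gamma}^{\{a\}}\cap\bbR_{>0}^2$ to one near the origin. One small remark: your closing caveat about the two-variable Taylor expansion being unreliable in the converse direction is overly cautious---since $\mu_a^{\{a\}}<0$, one can always pick $\vc{\theta}>\vc{0}$ with $\theta_{3-a}$ small enough relative to $\theta_a$ that the linear term is negative, which is exactly what the paper does.
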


\begin{proof}
We fix $a \in \{1,2\}$ arbitrarily. Since $|X_1^{\{a\}}| =
|X_2^{\{a\}}| \le 1$, the moment generating function
$\gamma^{\{a\}}(\vc{\theta})$ is holomorphic at $\vc{\theta} =
\vc{0}$. It thus follows from the Taylor's expansion of
$\gamma^{\{a\}}(\vc{\theta})$ at $\vc{\theta} = \vc{0}$ that the
following holds at a neighborhood of $\vc{\theta}=\vc{0}$:
\begin{eqnarray}
\gamma^{\{a\}}(\vc{\theta})
= \gamma^{\{a\}}(\vc{0}) 
+ \langle 
(\nabla_{\!\vc{\theta}} \gamma^{\{a\}})(\vc{0}),
\vc{\theta}
 \rangle
+ o(\| \vc{\theta} \|^2),
\label{Taylor-expansion-gamma}
\end{eqnarray}
where $\|\cdot\|$ denotes an arbitrary norm on $\bbR^2$, and where
$\nabla_{\!\vc{\theta}}$ denotes the gradient operator with respect to
a vector variable $\vc{\theta}$, i.e.,
\[
\nabla_{\!\vc{\theta}} 
= \left( 
{\partial \over \partial \theta_1},
{\partial \over \partial \theta_2} 
\right).
\]
Furthermore, it follows from
(\ref{eqn-mu_a^A}) that
\begin{equation*}
(\nabla_{\!\vc{\theta}} \gamma^{\{a\}})(\vc{0})
= \left(
{\partial\gamma^{\{a\}} \over \partial \theta_1}(\vc{0}),
{\partial\gamma^{\{a\}} \over \partial \theta_2}(\vc{0}),
\right)
= (\mu_1^{\{a\}}, \mu_2^{\{a\}}).
\end{equation*}
Substituting this equation and $\gamma^{\{a\}}(\vc{0}) =1$ into
(\ref{Taylor-expansion-gamma}) yields
\begin{eqnarray*}
\gamma^{\{a\}}(\vc{\theta}) - 1
= \mu_1^{\{a\}}\theta_1  + \mu_2^{\{a\}}\theta_2
+ o(\| \vc{\theta} \|^2),
\end{eqnarray*}
which implies that $\mathsf{\Gamma}^{\{a\}} \cap \bbR_{>0}^2 \neq
\emptyset$ if and only if $\mu_1^{\{a\}} \theta_1 + \mu_2^{\{a\}} \theta_2 < 0$ for some $\vc{\theta} > \vc{0}$, or equivalently, either $\mu_1^{\{a\}} < 0$ or $\mu_2^{\{a\}} < 0$. Recall here that $\mu_{3-a}^{\{a\}} \ge 0$ due to (\ref{ineqn-mu}). As a result, $\mu_a^{\{a\}} < 0$ if and only if
$\mathsf{\Gamma}^{\{a\}} \cap \bbR_{>0}^2 \neq \emptyset$.
\end{proof}

We now introduce the following assumption (see Fig.~\ref{fig:negative_drifts}).
\begin{assumpt}\label{assumpt-negative-drift}
$\mu_{1}^{\{1\}} < 0$, $\mu_{2}^{\{2\}} < 0$ and
\begin{equation}
\vc{\mu}^{\{1\}} \wedge \vc{\mu}^{\{2\}}
= \mu_{1}^{\{1\}} \mu_{2}^{\{2\}} - \mu_{2}^{\{1\}} \mu_{1}^{\{2\}} > 0. 
\label{cond-mu^{1}_X_mu^{2}>0}
\end{equation}
\end{assumpt}
\begin{figure}[htbp]
	\centering
	\includegraphics[scale=0.35]{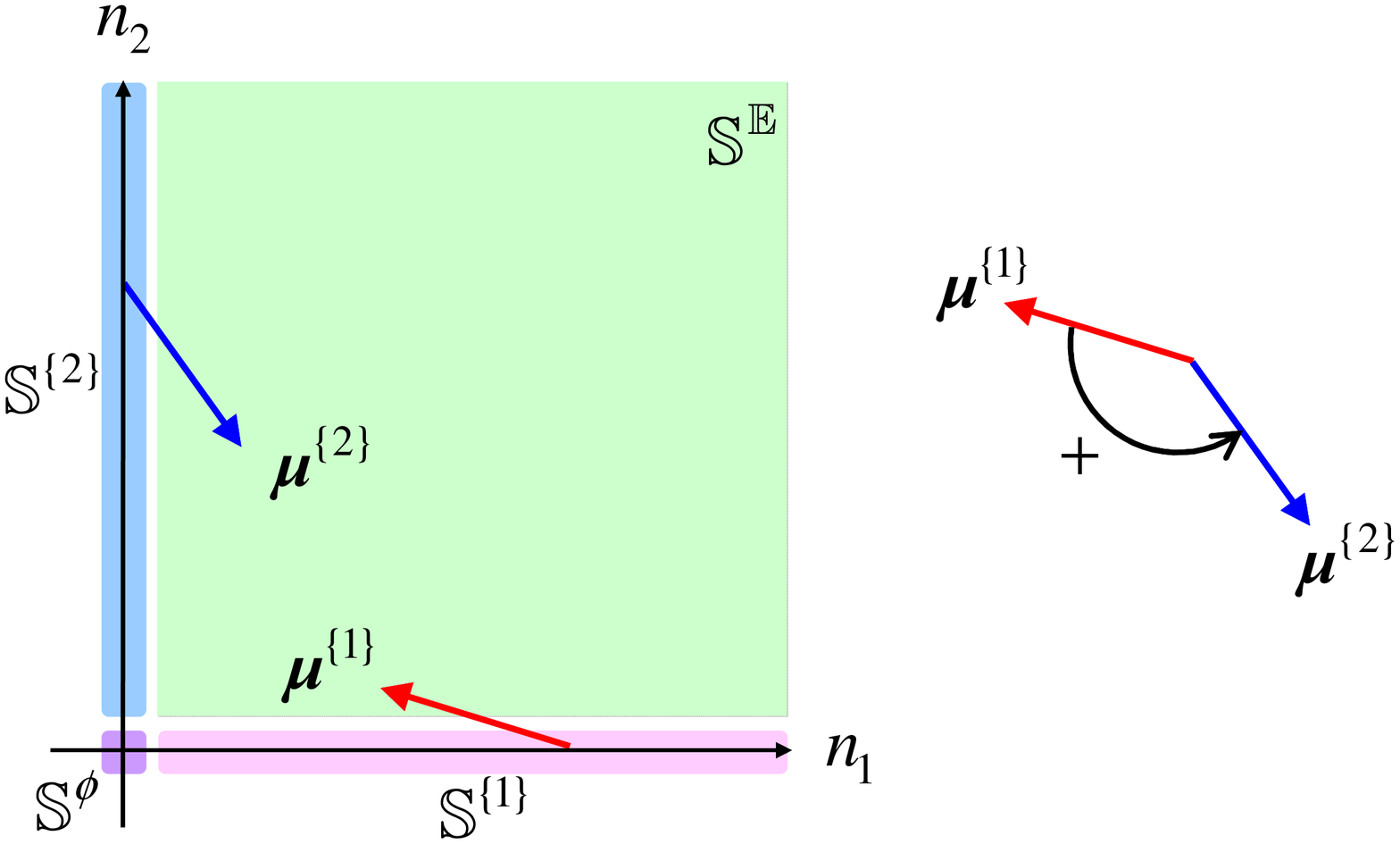}
	\caption{Mean drifts satisfying Assumption~\ref{assumpt-negative-drift}}
	\label{fig:negative_drifts}
\end{figure}

\begin{rem}
Recall that $\mu_2^{\{1\}} \ge 0$ and $\mu_1^{\{2\}} \ge 0$, as shown
in (\ref{ineqn-mu}). If either $\mu_2^{\{1\}} = 0$ or $\mu_1^{\{2\}} =
0$ holds, then (\ref{cond-mu^{1}_X_mu^{2}>0}) follows from
$\mu_{1}^{\{1\}} < 0$ and $\mu_{2}^{\{2\}} < 0$.
\end{rem}

\begin{rem}
\label{rem:Jackson}
A typical example of 2D-RRWs satisfying Assumption~\ref{assumpt-negative-drift} is a two-node Jackson network with
cooperative servers. For details, see Appendix~\ref{appendix-Jackson}.
\end{rem}

From \lemt{geometric-property}, we have the
following result, which plays an important role in the next section.

\begin{lem}\label{pro:geometric-property-2}
Assumption~\ref{assumpt-negative-drift} holds if and only if
\begin{equation}
\bigcap_{\bbA \in 2_+^{\bbE}}\mathsf{\Gamma}^{\bbA} \cap \mathbb{R}^{2}_{>0}
= \mathsf{\Gamma}^{\{1\}} \cap \mathsf{\Gamma}^{\{2\}} \cap
\mathsf{\Gamma}^{\bbE} \cap \mathbb{R}^{2}_{>0}
\neq \emptyset,
\label{eqn-common-domain}
\end{equation}
provided that Assumption~\ref{ass:stability} is satisfied.
\end{lem}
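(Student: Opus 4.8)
The plan is to begin by noting that the displayed equality is a tautology, since $2_+^{\bbE}=\{\{1\},\{2\},\bbE\}$; the genuine content is the equivalence, under Assumption~\ref{ass:stability}, between Assumption~\ref{assumpt-negative-drift} and the nonemptiness of $\mathsf{\Gamma}^{\{1\}}\cap\mathsf{\Gamma}^{\{2\}}\cap\mathsf{\Gamma}^{\bbE}\cap\bbR_{>0}^2$. The first key step is to linearize. Each $\gamma^{\bbA}(\vc\theta)=\sum_{\vc m}p^{\bbA}(\vc m)\exp\{\br{\vc\theta,\vc m}\}$ is smooth and convex, with $\gamma^{\bbA}(\vc0)=1$ and $(\nabla_{\!\vc\theta}\gamma^{\bbA})(\vc0)=\vc\mu^{\bbA}$ by (\ref{eqn-mu_a^A}). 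I claim the intersection is nonempty if and only if there is a direction $\vc v\in\bbR_{>0}^2$ with $\br{\vc\mu^{\bbA},\vc v}<0$ for every $\bbA\in2_+^{\bbE}$. For the ``only if'' part, any $\vc\theta^\ast$ in the intersection satisfies $\gamma^{\bbA}(\vc\theta^\ast)<1=\gamma^{\bbA}(\vc0)$, so the first-order convexity inequality gives $\br{\vc\mu^{\bbA},\vc\theta^\ast}\le\gamma^{\bbA}(\vc\theta^\ast)-1<0$, and $\vc v=\vc\theta^\ast$ works. For the ``if'' part, along $\vc\theta=t\vc v$ one has $\tfrac{d}{dt}\gamma^{\bbA}(t\vc v)\big|_{t=0}=\br{\vc\mu^{\bbA},\vc v}<0$, so $t\vc v$ lies in all three sets and in $\bbR_{>0}^2$ for all sufficiently small $t>0$.

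Next I would reduce to one variable by writing $\vc v=(1,s)$ with $s>0$, turning the three conditions into the scalar inequalities $\mu_1^{\bbA}+\mu_2^{\bbA}s<0$. Using $\mu_2^{\{1\}}\ge0$ and $\mu_1^{\{2\}}\ge0$ from (\ref{ineqn-mu}), the face-$\{1\}$ inequality admits a solution $s>0$ only if $\mu_1^{\{1\}}<0$, in which case it reads $s<s_1:=-\mu_1^{\{1\}}/\mu_2^{\{1\}}$ (with $s_1=+\infty$ when $\mu_2^{\{1\}}=0$), while the face-$\{2\}$ inequality forces $\mu_2^{\{2\}}<0$ and reads $s>s_2:=-\mu_1^{\{2\}}/\mu_2^{\{2\}}\ge0$; compare also Lemma~\ref{lem:geometric-property}. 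This yields the forward implication at once: if the intersection is nonempty, the slope $s$ of $\vc\theta^\ast$ satisfies both face inequalities, forcing $\mu_1^{\{1\}}<0$, $\mu_2^{\{2\}}<0$ and $s_2<s_1$, and a direct manipulation shows $s_2<s_1$ is exactly $\vc\mu^{\{1\}}\wedge\vc\mu^{\{2\}}>0$, i.e.\ Assumption~\ref{assumpt-negative-drift}. Note this direction uses neither the interior inequality nor Assumption~\ref{ass:stability}.

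The converse is the crux and is where Assumption~\ref{ass:stability} enters. Assuming Assumption~\ref{assumpt-negative-drift}, the interval $(s_2,s_1)$ is nonempty, so it remains to place a point of it inside the feasible set of the interior inequality $\mu_1^{\bbE}+\mu_2^{\bbE}s<0$. I would split into the three cases of Assumption~\ref{ass:stability}. In case (a), where $\mu_1^{\bbE}<0$ and $\mu_2^{\bbE}<0$, the interior inequality holds for every $s>0$, so any $s\in(s_2,s_1)$ works. In case (b) its feasible set is $(s_E,\infty)$ with $s_E=-\mu_1^{\bbE}/\mu_2^{\bbE}\ge0$, and the identity $s_E-s_1=-(\vc\mu^{\bbE}\wedge\vc\mu^{\{1\}})/(\mu_2^{\bbE}\mu_2^{\{1\}})$ combined with $\vc\mu^{\bbE}\wedge\vc\mu^{\{1\}}<0$ forces $s_E<s_1$, so $(\max(s_2,s_E),s_1)$ is feasible. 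Case (c) is symmetric: from $s_E-s_2=-(\vc\mu^{\bbE}\wedge\vc\mu^{\{2\}})/(\mu_2^{\bbE}\mu_2^{\{2\}})$ and $\vc\mu^{\bbE}\wedge\vc\mu^{\{2\}}>0$ one gets $s_2<s_E$, so $(s_2,\min(s_1,s_E))$ is feasible. The degenerate subcases ($\mu_2^{\{1\}}=0$, $\mu_1^{\{2\}}=0$, or $\mu_2^{\bbE}=0$) are dispatched by reading the corresponding threshold as $+\infty$ or the constraint as vacuous.

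The main obstacle is precisely this last step: verifying that the cross-product sign conditions built into Assumption~\ref{ass:stability} pin the interior threshold $s_E$ on the correct side of $s_1$ (in case (b)) or $s_2$ (in case (c)), which is exactly what guarantees the interior inequality is simultaneously compatible with the two face inequalities. Everything else is either a tautology, a convexity reduction, or elementary one-dimensional interval bookkeeping; the stability cases (a)--(c) are the only place where the geometry of $\mathsf{\Gamma}^{\bbE}$ relative to $\mathsf{\Gamma}^{\{1\}}$ and $\mathsf{\Gamma}^{\{2\}}$ must be controlled, and the sign identities above are the mechanism for doing so.
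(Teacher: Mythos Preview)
Your argument is correct, and it is in fact substantially more rigorous than the paper's own proof. The paper's proof invokes \lemt{geometric-property} for the conditions $\mu_1^{\{1\}}<0$ and $\mu_2^{\{2\}}<0$, then disposes of the cross-product condition $\vc{\mu}^{\{1\}}\wedge\vc{\mu}^{\{2\}}>0$ and the role of $\mathsf{\Gamma}^{\bbE}$ by a direct appeal to convexity of the level sets, to the gradients at the origin, and to a figure (Fig.~\ref{fig:geo_int_1}); in particular, the paper never splits Assumption~\ref{ass:stability} into its three cases and never writes down the slope identities you use. Your approach instead linearizes all three constraints simultaneously via the first-order convexity inequality $\gamma^{\bbA}(\vc\theta)\ge 1+\br{\vc{\mu}^{\bbA},\vc\theta}$, reduces to one-dimensional interval intersections in the slope variable~$s$, and then in the converse direction uses the sign identities $s_E-s_1=-(\vc{\mu}^{\bbE}\wedge\vc{\mu}^{\{1\}})/(\mu_2^{\bbE}\mu_2^{\{1\}})$ and $s_E-s_2=-(\vc{\mu}^{\bbE}\wedge\vc{\mu}^{\{2\}})/(\mu_2^{\bbE}\mu_2^{\{2\}})$ to pin $s_E$ against $s_1$ or $s_2$ in cases~(b) and~(c). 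This buys you a fully self-contained argument that does not rely on a picture, and it also makes explicit (which the paper does not) that the forward implication requires neither the interior constraint $\gamma^{\bbE}(\vc\theta)<1$ nor Assumption~\ref{ass:stability}. The paper's version, by contrast, is shorter and conveys the geometric intuition more directly, at the cost of leaving the reader to fill in the quantitative details from the figure.
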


\begin{proof}
\lemt{geometric-property} shows that
Assumption~\ref{assumpt-negative-drift} is necessary for that
$\bigcap_{\bbA \in 2_+^{\bbE}}\mathsf{\Gamma}^{\bbA} \cap \mathbb{R}^{2}_{>0} \neq \emptyset$. It
follows from (\ref{defn-gamma(theta)}) and (\ref{eqn:gamma<1}) that
the domains $\mathsf{\Gamma}^{\bbA} \cup \partial
\mathsf{\Gamma}^{\bbA}$'s, $\bbA \in 2_+^{\bbE}$, are convex and the
curves $\partial\mathsf{\Gamma}^{\bbA}$'s, $\bbA \in 2_+^{\bbE}$,
include the origin $(0,0)$.  Furthermore, $\vc{\mu}^{\{1\}}$ and
$\vc{\mu}^{\{2\}}$ are the gradient vectors, at the origin $(0,0)$, of
the functions $\gamma^{\{1\}}$ and $\gamma^{\{2\}}$, respectively and
they satisfy (\ref{ineqn-mu}). These facts, together with
Fig.~\ref{fig:geo_int_1}, imply that if $\vc{\mu}^{\{1\}} \wedge
\vc{\mu}^{\{2\}} > 0$ then $\bigcap_{\bbA \in 2_+^{\bbE}}\mathsf{\Gamma}^{\bbA} \cap \mathbb{R}^{2}_{>0} \cap
\mathbb{R}^{2}_{>0} \neq \emptyset$, and vice versa.
As a result, Assumption~\ref{assumpt-negative-drift} is equivalent to
(\ref{eqn-common-domain}), under Assumption~\ref{ass:stability}.
\end{proof}

\begin{figure}[htbp]
	\centering
	\includegraphics[scale=0.35]{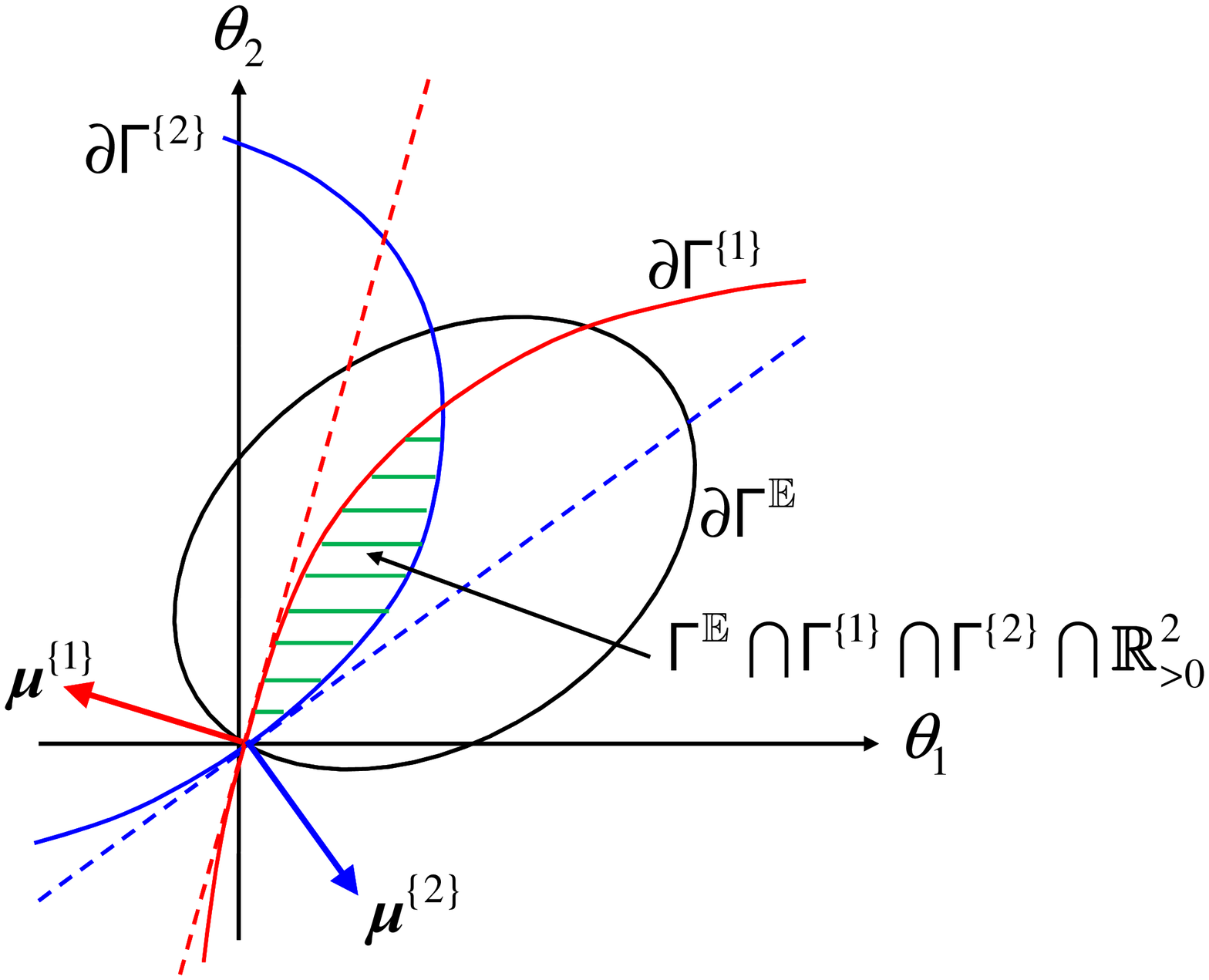}
	\caption{Relation between the domains $\mathsf{\Gamma}^{\bbE}$, $\mathsf{\Gamma}^{\{1\}}$ and $\mathsf{\Gamma}^{\{2\}}$}
	\label{fig:geo_int_1}
\end{figure}

\subsection{Geometric drift condition}

In this subsection, we show that a Foster-Lyapunov condition (Condition~\ref{cond:assumpt-f-ergodic} below) holds for the 
transition probability matrix $\vc{P}:=(p(\vc{n};\vc{m}))_{(\vc{n};\vc{m})\in\bbS^2}$ of the 2D-RRW $\{\vc{Z}(\ell)\}$ (see (\ref{defn-p(n;m)})), 
under Assumptions~\ref{ass:stability} and \ref{assumpt-negative-drift}.
\begin{cond}[{}{\cite[Section~14.2.1]{Meyn09}}]
\label{cond:assumpt-f-ergodic}
There exist a finite set $\bbK \subset \bbS$, $b \in \bbR_{>0}$, $c \in (0,1)$ and column vector $\vc{v}:=(v(\vc{n}))_{\vc{n} \in \bbS} \ge \vc{e}/c$
such that
\begin{equation}
\vc{P}\vc{v} - \vc{v} \le  - c\vc{v} + b \vc{1}_{\bbK},
\label{eqn:ineqn-Qv}
\end{equation}
where, for any set $\bbC \subseteq \bbS$,
$\vc{1}_{\bbC}:=(1_{\bbC}(\vc{n}))_{\vc{n}\in\bbS}$ denotes a column
vector such that
\[
1_{\bbC}(\vc{n})
=\left\{
\begin{array}{ll}
1, & \vc{n} \in \bbC,
\\
0, & \vc{n} \in \bbS\setminus\bbC.
\end{array}
\right.
\]
\end{cond}

It is known (\cite[Theorem~15.0.1]{Meyn09}) that if $\vc{P}$ is
irreducible and aperiodic and Condition~\ref{cond:assumpt-f-ergodic}
holds then $\vc{P}$ is geometrically ergodic. Thus, we refer to
Condition~\ref{cond:assumpt-f-ergodic} as the {\it geometric drift
  condition}.

Under Assumptions~\ref{ass:stability} and
\ref{assumpt-negative-drift}, we establish the geometric drift condition
on the 2D-RRW.
\begin{lem}\label{lem:v-02}
Suppose that Assumptions~\ref{ass:stability} and
\ref{assumpt-negative-drift} are satisfied. Fix $\vc{\theta} \in \bigcap_{\bbA \in 2_+^{\bbE}}\mathsf{\Gamma}^{\bbA} \cap \bbR_{>0}^2$ and $\vc{v}=(v(\vc{n}))_{\vc{n} \in \bbS}$ such that
\begin{align}
v(\vc{n}) = c^{-1}\exp\{\br{\vc{\theta},\vc{n}}\}, \qquad \vc{n} \in \bbS,
\label{defn-v}
\end{align} 
where 
\begin{equation}
c = 1 - 
\max(\gamma^{\{1\}}(\vc{\theta}),\gamma^{\{2\}}(\vc{\theta}),\gamma^{\bbE}(\vc{\theta})).
\label{eqn:defn-c}
\end{equation}
Under these conditions, Condition~\ref{cond:assumpt-f-ergodic} holds
for $\bbK = \{(0,0)\}$ and
\begin{equation}
b = 1 + c^{-1} (\gamma^{\emptyset}(\vc{\theta}) - 1) > 0.
\label{defn-b-02}
\end{equation}
\end{lem}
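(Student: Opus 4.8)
The plan is to exploit the fact that $\vc{v}$ is, up to the scalar $c^{-1}$, a pure exponential, so that applying $\vc{P}$ reproduces $\vc{v}$ with a region-dependent multiplicative factor equal to the moment generating function of the corresponding increment. First I would record the two elementary facts that make the statement well-posed. Since $\vc{\theta} \in \bigcap_{\bbA \in 2_+^{\bbE}}\mathsf{\Gamma}^{\bbA}$ (nonempty by Lemma~\ref{pro:geometric-property-2}), each of $\gamma^{\{1\}}(\vc{\theta}),\gamma^{\{2\}}(\vc{\theta}),\gamma^{\bbE}(\vc{\theta})$ is strictly below $1$ and, being a moment generating function, strictly positive; hence $c = 1-\max(\gamma^{\{1\}}(\vc{\theta}),\gamma^{\{2\}}(\vc{\theta}),\gamma^{\bbE}(\vc{\theta})) \in (0,1)$. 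Moreover, $\vc{\theta} > \vc{0}$ and $\vc{n} \ge \vc{0}$ give $\br{\vc{\theta},\vc{n}} \ge 0$, so $v(\vc{n}) \ge c^{-1}$, i.e. $\vc{v} \ge \vc{e}/c$.

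The key computation is an exponential eigen-relation. Fix $\bbA \in 2^{\bbE}$ and $\vc{n} \in \bbS^{\bbA}$; using (\ref{eqn-p(n,m)}) and substituting $\vc{k} = \vc{m}-\vc{n} \in \bbU^{\bbA}$, I would write
\begin{equation*}
(\vc{P}\vc{v})(\vc{n})
= \sum_{\vc{k}\in\bbU^{\bbA}} p^{\bbA}(\vc{k})\, v(\vc{n}+\vc{k})
= c^{-1}\exp\{\br{\vc{\theta},\vc{n}}\} \sum_{\vc{k}\in\bbU^{\bbA}} p^{\bbA}(\vc{k})\exp\{\br{\vc{\theta},\vc{k}}\}
= \gamma^{\bbA}(\vc{\theta})\, v(\vc{n}),
\end{equation*}
where the inner sum equals $\gamma^{\bbA}(\vc{\theta})$ because $\PP(\vc{X}^{\bbA}\in\bbU^{\bbA})=1$. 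The reflecting structure is exactly what makes this clean: the support $\bbU^{\bbA}$ is designed so that $\vc{n}+\vc{k}\in\bbS$ for every $\vc{n}\in\bbS^{\bbA}$, so no out-of-state terms arise. The identity holds for all four sets, including $\bbA=\emptyset$ with $\gamma^{\emptyset}(\vc{\theta})=\sum_{\vc{k}\in\bbU^{\emptyset}}p^{\emptyset}(\vc{k})\exp\{\br{\vc{\theta},\vc{k}}\}$.

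With this identity, (\ref{eqn:ineqn-Qv}) reduces to the pointwise comparison $(\gamma^{\bbA}(\vc{\theta})-1)\,v(\vc{n}) \le -c\,v(\vc{n}) + b\,1_{\bbK}(\vc{n})$ on each region. On the interior $\bbS^{\bbE}$ and the faces $\bbS^{\{1\}},\bbS^{\{2\}}$ the indicator vanishes, so I must verify $\gamma^{\bbA}(\vc{\theta})-1 \le -c$ for $\bbA\in 2_+^{\bbE}$; this is immediate from the definition $c = 1-\max(\gamma^{\{1\}}(\vc{\theta}),\gamma^{\{2\}}(\vc{\theta}),\gamma^{\bbE}(\vc{\theta}))$. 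At the single point $\bbK=\bbS^{\emptyset}=\{(0,0)\}$, where $v(0,0)=c^{-1}$, the inequality becomes $(\gamma^{\emptyset}(\vc{\theta})-1)c^{-1} \le -1 + b$, which holds with equality for the prescribed $b = 1 + c^{-1}(\gamma^{\emptyset}(\vc{\theta})-1)$.

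Finally, to confirm $b\in\bbR_{>0}$, I would observe that $\vc{X}^{\emptyset}\in\{0,1\}\times\{0,1\}$ w.p.1 together with $\vc{\theta}>\vc{0}$ forces $\br{\vc{\theta},\vc{X}^{\emptyset}}\ge0$, hence $\gamma^{\emptyset}(\vc{\theta})\ge1$ and $b\ge1>0$. I do not expect a serious obstacle: the whole argument hinges on the eigen-relation $(\vc{P}\vc{v})(\vc{n})=\gamma^{\bbA}(\vc{\theta})v(\vc{n})$, and the only mild subtlety is recognizing that $\gamma^{\emptyset}(\vc{\theta})\ge1$ is precisely what makes the origin an exceptional state needing the $b\vc{1}_{\bbK}$ correction, whereas strict contraction off the origin is guaranteed region-by-region by the existence, via Lemma~\ref{pro:geometric-property-2}, of a common $\vc{\theta}>\vc{0}$ with all three relevant moment generating functions below $1$.
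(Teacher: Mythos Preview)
Your proof is correct and follows essentially the same approach as the paper: both compute $(\vc{P}\vc{v})(\vc{n})=\gamma^{\bbA}(\vc{\theta})\,v(\vc{n})$ on each region $\bbS^{\bbA}$, use $\gamma^{\bbA}(\vc{\theta})\le 1-c$ for $\bbA\in 2_+^{\bbE}$ to get contraction off the origin, and handle the origin with the explicit choice of $b$, noting $\gamma^{\emptyset}(\vc{\theta})\ge 1$ from $\vc{X}^{\emptyset}\ge\vc{0}$ and $\vc{\theta}>\vc{0}$. You are slightly more careful than the paper in that you also verify $c\in(0,1)$ and $\vc{v}\ge\vc{e}/c$, which Condition~\ref{cond:assumpt-f-ergodic} requires but the paper's proof leaves implicit.
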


\proof It follows from (\ref{defn-gamma(theta)}) and (\ref{defn-v})
that, for $\vc{n} \in \bbS^{\bbA}$ and $\bbA \in 2_+^{\bbE}$,
\begin{eqnarray*}
\sum_{\vc{m} \in \bbU^{\bbA}} p^{\bbA}(\vc{m})v(\vc{n}+\vc{m})
&=& 
c^{-1}\sum_{\vc{m} \in \bbU^{\bbA}} p^{\bbA}(\vc{m})
\exp\{ \br{\vc{\theta},\vc{n} + \vc{m}} \}
\nonumber
\\
&=& c^{-1}\exp\{ \br{\vc{\theta},\vc{n} } \}
\sum_{\vc{m} \in \bbU^{\bbA}} 
p^{\bbA}(\vc{m})\exp\{ \br{\vc{\theta},\vc{m}} \}
\nonumber
\\
&=& v(\vc{n})\gamma^{\bbA}(\vc{\theta}) \le (1 - c) v(\vc{n}),
\end{eqnarray*}
where the inequality is due to (\ref{eqn:defn-c}).
Similarly, we have
\begin{eqnarray*}
\sum_{\vc{m} \in \bbU^{\emptyset}} p^{\emptyset}(\vc{m})v(\vc{m})
&=& c^{-1} \sum_{\vc{m} \in \bbU^{\emptyset}} 
p^{\emptyset}(\vc{m}) \exp\{ \br{ \vc{\theta}, \vc{m} } \} 
\nonumber
\\
&=& c^{-1} \gamma^{\emptyset}(\vc{\theta}) 
= (c^{-1} - 1) + b
\nonumber
\\
&=& (1-c)v(\vc{0}) + b,
\end{eqnarray*}
where the last two equalities are due to (\ref{defn-b-02}) and
$v(\vc{0}) = c^{-1}$.  Note here that, since $\vc{\theta} > \vc{0}$
and $\vc{X}^{\emptyset} \ge \vc{0}$, we have
$\gamma^{\emptyset}(\vc{\theta}) -1 \ge 0$ and thus $b >
0$. Therefore, (\ref{eqn:ineqn-Qv}) holds for $\mathbb{K} = \{(0,0)\}$
and $b$ given in (\ref{defn-b-02}). \qed

\section{QBD approximation}

This section describes the QBD approximation of the 2D-RRW
$\{\vc{Z}(\ell);\ell\in\bbZ_+\}$. We first reformulate the
2D-RRW $\{\vc{Z}(\ell)\}$ as a quasi-birth-and-death process (QBD)
such that its phase process is an infinite
birth-and-death process. We then provide the definition of the QBD
approximation of the 2D-RRW.

\subsection{Reformulation as double QBD}

For convenience, we refer to $Z_1(\ell)$ and $Z_2(\ell)$ as the {\it
  level} and the {\it phase}, respectively.  We then arrange the
elements of the state space $\bbS$ in a lexicographical order such
that the level is the primary variable and the phase is the secondary
one. Since the marginal processes $\{Z_1(\ell)\}$ and $\{Z_2(\ell)\}$
increases or decreases by at most one, the transition probability
matrix $\vc{P}$ of $\{\vc{Z}(\ell)\}$ has the QBD-structure:
\begin{align}
\vc{P}
= 
\left(
\begin{array}{@{}c@{~}c@{~}c@{~}c@{~}c@{~}c@{}}
\vc{B}(0) 	& 
\vc{B}(1) 	&
\vc{O} 		&
\vc{O} 		&
\vc{O} 		&
\cdots
\\
\vc{A}(-1) 	& 
\vc{A}(0) 	&
\vc{A}(1)	&
\vc{O} 		&
\vc{O} 		&
\cdots
\\
\vc{O}&
\vc{A}(-1) 	& 
\vc{A}(0) 	&
\vc{A}(1)	&
\vc{O} 		&
\cdots
\\
\vc{O}		&
\vc{O}		&
\vc{A}(-1) 	& 
\vc{A}(0) 	&
\vc{A}(1) 	&
\cdots
\\
\vdots		&
\vdots		&
\vdots		&
\vdots		&
\ddots		&
\ddots
\end{array}
\right),
\label{defn-P}
\end{align}
where $\vc{O}$ denotes the zero matrix, and where $\vc{A}(k)$, $k \in
\{0,\pm1\}$, and $\vc{B}(k)$, $k \in \{0,1\}$, are given by
\begin{align}
\vc{A}(k) 
&= 
\left(
\begin{array}{@{}c@{~\,}c@{~\,}c@{~\,}c@{~\,}c@{}}
p^{\{1\}}(k,0) 	& 
p^{\{1\}}(k,1) 	&
0				&
0				&
\cdots
\\
p^{\bbE}(k,-1) & 
p^{\bbE}(k,0) 	&
p^{\bbE}(k,1)  &
0				&
\cdots
\\
0				&
p^{\bbE}(k,-1) & 
p^{\bbE}(k,0) 	&
p^{\bbE}(k,1)  &
\cdots
\\
0				&
0				&
p^{\bbE}(k,-1) & 
p^{\bbE}(k,0) 	&
\cdots
\\
\vdots			&
\vdots			&
\vdots			&
\vdots			&
\ddots
\end{array}
\right),
\label{defn-A_k}
\end{align}
and
\begin{align}
\vc{B}(k) 
&= 
\left(
\begin{array}{@{}c@{~}c@{~}c@{~}c@{~}c@{}}
p^{\emptyset}(k,0) 	& 
p^{\emptyset}(k,1) 	&
0					&
0					&
\cdots
\\
\rule{0mm}{5mm} p^{\{2\}}(k,-1) & 
p^{\{2\}}(k,0) 		        	&
p^{\{2\}}(k,1)  	 	        &
0								&
\cdots
\\
\rule{0mm}{5mm} 0				&
p^{\{2\}}(k,-1) 				& 
p^{\{2\}}(k,0) 					&
p^{\{2\}}(k,1)  				&
\cdots
\\
\rule{0mm}{5mm} 0				&
0								&
p^{\{2\}}(k,-1) 				& 
p^{\{2\}}(k,0) 					&
\cdots
\\
\rule{0mm}{5mm}	\vdots 			&
\vdots							&
\vdots							&
\vdots							&
\ddots					
\end{array}
\right),
\label{defn-B_k}
\end{align}
respectively. By interchanging
$Z_1(\ell)$ and $Z_2(\ell)$, we can construct another
QBD-structured transition probability matrix. This is why the reformulated
$\{\vc{Z}(\ell)\}$ is sometimes called a {\it double QBD}
\cite{Koba13,Miya09}.

\subsection{Definition of QBD approximation}\label{subsec-single}

For $n \in \bbZ_+$, let $\sqsub{n}\vc{A}(k):=(\lsub{[n]}A_{i,j}(k))_{i,j\in\bbZ_n}$, $k \in \{0,\pm1\}$, and
$\sqsub{n}\vc{B}(k):=(\lsub{[n]}B_{i,j}(k))_{i,j\in\bbZ_n}$, $k \in \{0,1\}$, denote
\begin{equation}
\sqsub{n}\vc{A}(k)
= \left(
\begin{array}{@{}c@{}c@{}c@{~}|@{~}c@{}}

  & 
  & 
  &
0 
\\

 						&
\pasub{n-1}\vc{A}(k)	&
  						&
\vdots 					 			
\\

  & 
  & 
  &
0 			
\\

  & 
  & 
  &
p^{\bbE}(k,1) 
\\
\hline
0								&
\cdots							& 
0~~p^{\bbE}(k,-1)       				&
\rule{0mm}{5mm}\dm\sum_{i=0}^1 p^{\bbE}(k,i) 
\end{array}
\right),
\label{defn-(n)A_k}
\end{equation}
and
\begin{equation}
\hspace{-2mm}
\sqsub{n}\vc{B}(k)
= \left(
\begin{array}{@{}c@{}c@{}c@{~}|@{~}c@{}}
  &
  &
  & 
0 
\\
  			&
\pasub{n-1}\vc{B}(k)	&
&
\vdots 			  		
\\
  & 
  & 
  &
0						
\\
  & 
  & 
  &
p^{\{2\}}\!(k,1) 
\\
\hline
0 					&
\cdots  			& 
0~~p^{\{2\}}\!(k,-1)       &
\rule{0mm}{5mm}\dm\sum_{i=0}^1 p^{\{2\}}\!(k,i) 
\end{array}
\right).
\label{defn-(n)B_k}
\end{equation}
respectively, where $\bbZ_n = \{0,1,\dots,n\}$, and where
$\lsub{(n)}\vc{A}(k):=(\lsub{(n)}A_{i,j}(k))_{i,j\in\bbZ_n}$ and
$\lsub{(n)}\vc{B}(k):=(\lsub{(n)}B_{i,j}(k))_{i,j\in\bbZ_n}$ are the
$(n+1) \times (n+1)$ northwest-corners of $\vc{A}(k)$ and $\vc{B}(k)$,
respectively. 
We then define
$\sqsub{n}\vc{P}:=(\sqsub{n}p(k,i;k',i'))_{(k,i;k',i')\in(\bbZ_+\times\bbZ_n)^2}$,
$n \in \bbN$, as
\begin{equation}
\sqsub{n}\vc{P}
= \left(
\begin{array}{@{}c@{\,}c@{\,}c@{\,}c@{\,}c@{}}
\sqsub{n}\vc{B}(0) & 
\sqsub{n}\vc{B}(1) &
\vc{O} &
\vc{O} &
\cdots
\\
\sqsub{n}\vc{A}(-1) 	& 
\sqsub{n}\vc{A}(0) 	&
\sqsub{n}\vc{A}(1) 	&
\vc{O} 			&
\cdots
\\
\vc{O}&
\sqsub{n}\vc{A}(-1) 	& 
\sqsub{n}\vc{A}(0) 	&
\sqsub{n}\vc{A}(1) 	&
\cdots
\\
\vc{O}			&
\vc{O}			&
\sqsub{n}\vc{A}(-1) 	& 
\sqsub{n}\vc{A}(0) 	&
\cdots
\\
\vdots			&
\vdots			&
\vdots			&
\vdots			&
\ddots
\end{array}
\right),
\label{defn-[n]P}
\end{equation}
where $(k,i;k',i')$ denotes ordered pair $((k,i),(k',i'))$ in $(\bbZ_+ \times \bbZ_n)^2$.
Clearly, $\sqsub{n}\vc{P}$ is the transition probability matrix of a
standard QBD, i.e., a QBD with infinite levels and finite
phases. Thus, we refer to $\sqsub{n}\vc{P}$ as the {\it QBD
  approximation} to $\vc{P}$.

We now consider a two-dimensional Markov chain
$\{\sqsub{n}\vc{Z}(\ell):=(\sqsub{n}Z_1(\ell),\sqsub{n}Z_2(\ell)); \ell\in\bbZ_+\}$ with state space $\bbS_n:= \bbZ_+ \times
\bbZ_n$ such that
\begin{align}
\sqsub{n}Z_1(\ell + 1)
&= \sqsub{n}Z_1(\ell) + \sum_{\bbA \in 2^{\bbE}} 
X_1^{\bbA}(\ell) I(\sqsub{n}\vc{Z}(\ell) \in \bbS^{\bbA}),
& \ell &\in \bbZ_+,
\label{defn-[n]Z_1(l)}
\\
\sqsub{n}Z_2(\ell + 1) 
&= \min\!\bigg(n,
\sqsub{n}Z_2(\ell) + \sum_{\bbA \in 2^{\bbE}} 
X_2^{\bbA}(\ell) I(\sqsub{n}\vc{Z}(\ell) \in \bbS^{\bbA}) 
\bigg), &  \ell &\in \bbZ_+.
\label{defn-[n]Z_2(l)}
\end{align}
It is easy to see that $\sqsub{n}\vc{P}$ is equal to the
transition probability matrix of the two-dimensional Markov chain
$\{\sqsub{n}\vc{Z}(\ell)\}$. Note here that
$\{\sqsub{n}\vc{Z}(\ell)\}$ is a 2D-RRW obtained by adding a
reflecting barrier at $n_2 = n$ to the state space $\bbS=\{(n_1,n_2)
\in \bbZ_+^2\}$ of the original 2D-RRW $\{\vc{Z}(\ell)\}$. Since
$\{\vc{Z}(\ell)\}$ is ergodic (i.e., irreducible, positive recurrent and aperiodic), it follows from (\ref{defn-[n]Z_1(l)})
and (\ref{defn-[n]Z_2(l)}) that $\{\sqsub{n}\vc{Z}(\ell)\}$ is
irreducible and positive recurrent. Therefore, $\sqsub{n}\vc{P}$ has
the unique stationary distribution, denoted by
$\sqsub{n}\vc{\pi}:=(\sqsub{n}\pi((k,i))_{(k,i)\in\bbS_n}$, which is
referred to as the {\it QBD approximation} to $\vc{\pi}$.

\section{Error bounds for QBD approximation}

In this section, we assume that Assumptions~\ref{ass:stability} and \ref{assumpt-negative-drift} hold, under which we present relative error bounds for the approximate time-averaged functional
obtained by the QBD approximation $\lsub{[n]}\vc{\pi}$.

For each $n \in \bbZ_+$, we extend $\lsub{[n]}\vc{\pi}$,  $\sqsub{n}\vc{A}(k)$ and $\sqsub{n}\vc{B}(k)$  to the respective orders of $\vc{\pi}$, $\vc{A}(k)$ and $\vc{B}(k)$ in such a way that
\begin{align}
&&&&&&&&
\lsub{[n]}\pi(k,i) &= 0, &
k &\in \bbZ_+, & i &\in \bbZ_+ \setminus \bbZ_n, &&&&&&&&
\label{eqn-[n]pi(k)=0}
\\
&&&&&&&&
\sqsub{n}A_{i,j}(k) &= 0,
& k &\in \{0,\pm 1\}, & (i,j) &\in \bbS \setminus \bbZ_n^2, &&&&&&&&
\label{eqn-[n]A(k)=0}
\\
&&&&&&&&
\sqsub{n}B_{i,j}(k) &= 0,
& k &\in \{0,1\}, & (i,j) &\in \bbS \setminus \bbZ_n^2. &&&&&&&&
\label{eqn-[n]B(k)=0}
\end{align}
Therefore, $\sqsub{n}\vc{P}$ is of the same
order as that of $\vc{P}$.
 
We now define $\vc{D}$ as the deviation matrix of $\vc{P}$ (see, e.g., \cite{Cool02}), i.e.,
\begin{equation}
\vc{D}
= \sum_{\ell = 0}^{\infty}
( \vc{P}^{\ell} - \vc{e}\vc{\pi} ).
\label{defn-D}
\end{equation}
Since $\vc{P}$ is geometrically ergodic (see Condition~\ref{cond:assumpt-f-ergodic} and Lemma~\ref{lem:v-02}), the deviation matrix $\vc{D}$
is well-defined (see, e.g., \cite[Theorem~15.0.1]{Meyn09}).  Furthermore, combining \cite[Section 4.1, Equation (9)]{Heid10} together with (\ref{eqn-[n]pi(k)=0})--(\ref{eqn-[n]B(k)=0}) yields
\[
\sqsub{n}\vc{\pi} - \vc{\pi}
= \sqsub{n}\vc{\pi} \left( \sqsub{n}\vc{P} - \vc{P} \right)\vc{D}, 
\qquad n  \in \bbN,
\]
which leads to
\begin{equation}
|\sqsub{n}\vc{\pi} - \vc{\pi}|
\le \sqsub{n}\vc{\pi} \left| \sqsub{n}\vc{P} - \vc{P} \right| |\vc{D}|, 
\qquad n  \in \bbN,
\label{eqn-diff-[n]pi}
\end{equation}
where $| \cdot |$ denotes the vector (resp.\ matrix) obtained by taking
the absolute values of the elements of the vector (resp. matrix) between
the vertical bars.  Therefore, we can estimate the absolute difference
between $\sqsub{n}\vc{\pi}$ and $\vc{\pi}$ once we obtain an upper
bound for $|\vc{D}|$.

For $\alpha \in (0,1)$, let
$\vc{S}^{(\alpha)}=(s^{(\alpha)}(k,i;k',i'))_{(k,i;k',i')\in\bbS^2}$
denote a stochastic matrix such that
\begin{equation}
\vc{S}^{(\alpha)}
= (1-\alpha) \sum_{\ell=0}^{\infty} \alpha^{\ell} \vc{P}^{\ell}
= (1-\alpha)(\vc{I} - \alpha\vc{P})^{-1} > \vc{O},
\label{defn-S^{(alpha)}}
\end{equation}
where $\vc{S}^{(\alpha)} > \vc{O}$ is due to the ergodicity of
$\{\vc{Z}(\ell)\}$. Furthermore, let
$\ol{\phi}_{\{\vc{0}\}}^{(\alpha)}$ denote
\begin{eqnarray}
\ol{\phi}_{\{\vc{0}\}}^{(\alpha)}
= 
\sup_{(k',i')\in\bbS}s^{(\alpha)}(0,0;k',i').
\label{defn-overline{varphi}_F_K}
\end{eqnarray}
It then follows from (\ref{defn-P}), (\ref{defn-S^{(alpha)}}) and
(\ref{defn-overline{varphi}_F_K}) that
\[
\ol{\phi}_{\{\vc{0}\}}^{(\alpha)}
\ge (1 - \alpha)[1 - \alpha p^{\{\emptyset\}}(0,0)]^{-1},
\]
and thus
\begin{equation}
\liminf_{\alpha \to 0}
\ol{\phi}_{\{\vc{0}\}}^{(\alpha)} \ge 1.
\label{liminf-ol{phi}}
\end{equation}
From (\ref{liminf-ol{phi}}) and Lemma~2.3 of \cite{Masu16-JORSJ}, we
obtain the following result.
\begin{lem}\label{lem-bound-h}
Suppose that all the conditions of Lemma~\ref{lem:v-02} are satisfied.
Let $\vc{g}:=(g(k,i))_{(k,i)\in\bbS}$ denote a nonnegative column
vector such that $\vc{0} \le \vc{g} \le c\vc{v}$. We then have
\begin{equation}
| \vc{D}|\, \vc{g}
\le (\vc{\pi} \vc{g} +1) 
\left(
\vc{v} 
+ {b \over c} \vc{e}
\right).
\label{bound-Dg}
\end{equation}
\end{lem}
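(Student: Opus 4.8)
The plan is to read (\ref{bound-Dg}) as a direct application of the general deviation-matrix estimate \cite[Lemma~2.3]{Masu16-JORSJ}, whose hypotheses are a geometric drift condition and a lower bound on the resolvent mass at the reference state; both are already in hand. Lemma~\ref{lem:v-02} supplies Condition~\ref{cond:assumpt-f-ergodic} with the explicit data $\bbK=\{(0,0)\}$, the Lyapunov function $\vc{v}$ of (\ref{defn-v}), and the constants $c\in(0,1)$ and $b>0$, which satisfy $\vc{v}\ge\vc{e}/c$ and the drift inequality (\ref{eqn:ineqn-Qv}). The bound (\ref{liminf-ol{phi}}) supplies the companion condition $\liminf_{\alpha\to0}\ol{\phi}_{\{\vc{0}\}}^{(\alpha)}\ge1$ on the resolvent $\vc{S}^{(\alpha)}$ of (\ref{defn-S^{(alpha)}}), and the test vector is assumed to satisfy exactly the admissibility constraint $\vc{0}\le\vc{g}\le c\vc{v}$. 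Hence the first step is merely to match notation and confirm these hypotheses, after which \cite[Lemma~2.3]{Masu16-JORSJ} returns (\ref{bound-Dg}) verbatim.

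To see where each factor originates --- and thereby to check that the cited lemma is being invoked correctly --- it is worth recalling the mechanism behind \cite[Lemma~2.3]{Masu16-JORSJ}. Because $\vc{g}\le c\vc{v}$, the drift inequality (\ref{eqn:ineqn-Qv}) sharpens to $\vc{P}\vc{v}-\vc{v}\le-\vc{g}+b\vc{1}_{\bbK}$. Writing $\tau_{\bbK}=\inf\{\ell\ge0:\vc{Z}(\ell)\in\bbK\}$ and $\tau_{\bbK}^{+}=\inf\{\ell\ge1:\vc{Z}(\ell)\in\bbK\}$, the comparison theorem (see, e.g., \cite{Meyn09}) then bounds the expected $\vc{g}$-cost accumulated over one excursion away from $\bbK$:
\begin{equation*}
\EE_{\vc{n}}\Bigg[\sum_{\ell=0}^{\tau_{\bbK}-1}g(\vc{Z}(\ell))\Bigg]\le v(\vc{n})\quad(\vc{n}\notin\bbK),\qquad \EE_{(0,0)}\Bigg[\sum_{\ell=0}^{\tau_{\bbK}^{+}-1}g(\vc{Z}(\ell))\Bigg]\le v(0,0)+b.
\end{equation*}
Since $v(0,0)=c^{-1}$ by (\ref{defn-v}), these two estimates are what give rise to the factor $\vc{v}+(b/c)\vc{e}$ once the excursion cost is weighted by the return frequency $\pi(0,0)$ to $\bbK$.

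The second ingredient is a regenerative decomposition of the deviation matrix (\ref{defn-D}) about the single reference state $(0,0)$: the $(\vc{n};\vc{m})$-entry of $\vc{D}$ splits into an excursion part $\EE_{\vc{n}}[\sum_{\ell=0}^{\tau_{\bbK}-1}(I(\vc{Z}(\ell)=\vc{m})-\pi(\vc{m}))]$ and the reference row $D((0,0);\vc{m})$. Forming $\sum_{\vc{m}}|D(\vc{n};\vc{m})|\,g(\vc{m})$ and estimating the positive and negative excursion parts separately by the $\vc{v}$-bound above, the stationary offset contributes $\vc{\pi}\vc{g}$, while the reference row is exactly the quantity controlled by $\ol{\phi}_{\{\vc{0}\}}^{(\alpha)}$; the lower bound (\ref{liminf-ol{phi}}), read in the limit $\alpha\to0$, is what certifies that this reference-row contribution is absorbed into the clean additive constant $1$ in the factor $\vc{\pi}\vc{g}+1$.

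The crux --- and the reason (\ref{bound-Dg}) does not follow from a one-line Poisson-equation argument --- is that the object to be bounded is $|\vc{D}|\vc{g}$, with the absolute value taken entrywise inside the deviation matrix, not $|\vc{D}\vc{g}|$. One therefore cannot simply bound the Poisson solution $\vc{h}=\vc{D}\vc{g}$ of $(\vc{I}-\vc{P})\vc{h}=\vc{g}-(\vc{\pi}\vc{g})\vc{e}$ by $\vc{v}$; the positive and negative parts of each row of $\vc{D}$ must be handled separately, which is precisely the two-sided excursion bookkeeping carried out in \cite[Lemma~2.3]{Masu16-JORSJ}. In the present paper that technical work is delegated to the reference, so the remaining task is the routine verification of its hypotheses through Lemma~\ref{lem:v-02} and (\ref{liminf-ol{phi}}).
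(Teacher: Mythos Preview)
Your proposal is correct and follows essentially the same route as the paper: verify the geometric drift hypothesis via Lemma~\ref{lem:v-02}, feed it together with the resolvent bound (\ref{liminf-ol{phi}}) into \cite[Lemma~2.3]{Masu16-JORSJ}, and pass to the limit $\alpha\downarrow0$. The one step the paper makes explicit that you gloss over is the continuous-time reformulation: since \cite[Lemma~2.3]{Masu16-JORSJ} is stated for generators, the paper first sets $\vc{Q}=\vc{P}-\vc{I}$, identifies $\vc{D}$ with the continuous-time deviation matrix $\int_0^\infty(\exp\{\vc{Q}t\}-\vc{e}\vc{\pi})\,\rmd t$, and rewrites $\vc{S}^{(\alpha)}=(\vc{I}-\vc{Q}/\beta)^{-1}$ with $\beta=\alpha^{-1}-1$, obtaining first the $\alpha$-dependent bound (\ref{bound-h}) before letting $\alpha\downarrow0$; your phrase ``returns (\ref{bound-Dg}) verbatim'' hides this translation and limiting step.
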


\medskip

\begin{proof}
Let $\vc{Q} = \vc{P} - \vc{I}$. Clearly, the matrix $\vc{Q}$ is a
conservative and irreducible $q$-matrix (i.e., a diagonally dominant matrix with
negative diagonal elements and nonnegative off-diagonal elements
such that $\vc{Q}\vc{e} = \vc{0}$; see, e.g., \cite[Section
  1.2]{Ande91}). Thus, $\vc{Q}$ can be considered the infinitesimal
generator of a uniformizable continuous-time Markov
chain whose stationary distribution and deviation matrix are equal to
$\vc{\pi}$ and $\vc{D}$ (see \cite[Section 2]{Cool02}), respectively,
which results in
\begin{equation}
\vc{D} = \int_0^{\infty} 
\left( \exp\{\vc{Q} t\} - \vc{e}\vc{\pi} \right) \rmd t.
\label{eqn-D}
\end{equation}

We now fix $\beta = \alpha^{-1} - 1 > 0$. It then follows from
(\ref{defn-S^{(alpha)}}) and $\vc{Q} = \vc{P} - \vc{I}$ that
\begin{eqnarray}
\vc{S}^{(\alpha)} 
= (\vc{I} - \vc{Q}/\beta)^{-1} > \vc{O}.
\end{eqnarray}
Furthermore, pre-multiplying both sides of (\ref{eqn:ineqn-Qv}) by $\vc{\pi}$, we have 
\begin{equation}
\vc{\pi}\vc{v} \le b/c.
\label{ineqn-pi*v}
\end{equation}
Applying Lemma~2.3 of \cite{Masu16-JORSJ} to $\vc{D}$ in (\ref{eqn-D}) and using (\ref{ineqn-pi*v}), we readily obtain
\begin{equation}
| \vc{D}|\, \vc{g}
\le (\vc{\pi} \vc{g} +1) 
\left[
\vc{v} 
+ b\left\{
{1 \over c}
+ { 2 \over (\alpha^{-1} - 1)\ol{\phi}_{\{\vc{0}\}}^{(\alpha)} } 
\right\} \vc{e}
\right].
\label{bound-h}
\end{equation}
Finally, letting $\alpha \downarrow 0$ in (\ref{bound-h}) and
using (\ref{liminf-ol{phi}}) yields (\ref{bound-Dg}).
\end{proof}

\medskip

From Lemma~\ref{lem-bound-h}, we obtain the following result.
\begin{thm}\label{thm-f-ergodic}
If all the conditions of Lemma~\ref{lem:v-02} are satisfied, then
\begin{eqnarray}
\sup_{\vc{0} < \vc{g} \le c\vc{v}}
{ \left| \sqsub{n}\vc{\pi} - \vc{\pi} \right| \vc{g} \over \vc{\pi}\vc{g} }
\le E(n), \qquad n  \in \bbN,
\label{bound-E(n)}
\end{eqnarray}
where
\begin{eqnarray}
E(n)
&=& {12 \over c } \sum_{k=0}^{\infty}\lsub{[n]}\pi(k,n) \!
\left\{ \rme^{\theta_1+\theta_2} \rme^{k\theta_1+n\theta_2} 
+ b \right\}.
\label{defn-E(n)}
\end{eqnarray}
\end{thm}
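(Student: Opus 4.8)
The plan is to combine the perturbation identity (\ref{eqn-diff-[n]pi}) with the deviation-matrix estimate of Lemma~\ref{lem-bound-h}. Set $\vc{\Delta}:=\sqsub{n}\vc{P}-\vc{P}$ and postmultiply (\ref{eqn-diff-[n]pi}) by a test vector $\vc{g}$ with $\vc{0}<\vc{g}\le c\vc{v}$. Since $\sqsub{n}\vc{\pi}$ and $|\vc{\Delta}|$ are nonnegative, applying the bound (\ref{bound-Dg}) to $|\vc{D}|\vc{g}$ gives
\[
|\sqsub{n}\vc{\pi}-\vc{\pi}|\,\vc{g}
\le \sqsub{n}\vc{\pi}\,|\vc{\Delta}|\,|\vc{D}|\,\vc{g}
\le (\vc{\pi}\vc{g}+1)\,\sqsub{n}\vc{\pi}\,|\vc{\Delta}|\Bigl(\vc{v}+\tfrac{b}{c}\vc{e}\Bigr).
\]
The crucial point is that the right-hand factor $\sqsub{n}\vc{\pi}\,|\vc{\Delta}|(\vc{v}+\tfrac{b}{c}\vc{e})$ is a single scalar independent of $\vc{g}$: the estimate (\ref{bound-Dg}) depends on $\vc{g}$ only through $\vc{\pi}\vc{g}$, so all of the $\vc{g}$-dependence is now concentrated in the prefactor $(\vc{\pi}\vc{g}+1)$.

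The main computation is to evaluate this scalar, for which the key observation is that $\vc{\Delta}$ is very sparse. Comparing the truncated blocks (\ref{defn-(n)A_k}) and (\ref{defn-(n)B_k}) with the originals (\ref{defn-A_k}) and (\ref{defn-B_k}), the reflecting barrier at phase $n$ only redirects the phase-up mass that the original walk would send from phase $n$ to phase $n+1$. Hence $\vc{\Delta}$ vanishes off the rows indexed by the states $(k,n)$, $k\in\bbZ_+$; and on the row $(k,n)$ with $k\ge 1$ the nonzero entries of $|\vc{\Delta}|$ are the six numbers $p^{\bbE}(k'-k,1)$, $k'\in\{k-1,k,k+1\}$, each appearing once in column $(k',n)$ (the folded-back mass) and once in column $(k',n+1)$ (the suppressed mass), while for $k=0$ one reads off the analogous four entries $p^{\{2\}}(k',1)$ from (\ref{defn-(n)B_k}). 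Thus every phase-$n$ row of $|\vc{\Delta}|$ has at most six entries, each bounded by $1$.

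It remains to pair these rows with $\vc{v}+\tfrac{b}{c}\vc{e}$. Because $\vc{\theta}>\vc{0}$, the function $v(\cdot)$ of (\ref{defn-v}) is increasing in both coordinates, so among the (at most six) states carrying nonzero entries of row $(k,n)$ its largest value occurs at $(k+1,n+1)$, giving $v(k',i')+\tfrac{b}{c}\le c^{-1}\{\rme^{\theta_1+\theta_2}\rme^{k\theta_1+n\theta_2}+b\}$ for every such state. Bounding the at-most-six entries by $1$ then yields $\sqsub{n}\vc{\pi}\,|\vc{\Delta}|(\vc{v}+\tfrac{b}{c}\vc{e})\le \tfrac12 E(n)$ with $E(n)$ as in (\ref{defn-E(n)}). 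Dividing the displayed inequality by $\vc{\pi}\vc{g}$ and inserting this bound leaves
\[
\frac{|\sqsub{n}\vc{\pi}-\vc{\pi}|\,\vc{g}}{\vc{\pi}\vc{g}}
\le \frac{\vc{\pi}\vc{g}+1}{\vc{\pi}\vc{g}}\cdot\tfrac12 E(n),
\]
so the proof closes once $(\vc{\pi}\vc{g}+1)/(\vc{\pi}\vc{g})\le 2$ is established, which supplies the missing factor turning $\tfrac12 E(n)$ into $E(n)$ (this is exactly the arithmetic behind the constant $12=6\times 2$).

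I expect this last step to be the main obstacle. Taken literally the prefactor $(\vc{\pi}\vc{g}+1)/(\vc{\pi}\vc{g})$ is unbounded as $\vc{\pi}\vc{g}\downarrow 0$, so one cannot simply bound it by $2$ for every feasible $\vc{g}$; the saving grace is that the ratio on the left is invariant under positive rescaling of $\vc{g}$, so one should replace each direction by its largest feasible representative and exploit $\vc{v}\ge\vc{e}/c$ (whence $c\vc{\pi}\vc{v}\ge 1$) to push $\vc{\pi}\vc{g}$ up to at least $1$. Confirming that this rescaling stays inside the box $\{\vc{0}<\vc{g}\le c\vc{v}\}$ uniformly over all directions is the delicate part of the argument and is where the constraint $\vc{g}\le c\vc{v}$ must be used carefully; reading off the precise six truncation-difference entries from (\ref{defn-(n)A_k})--(\ref{defn-(n)B_k}) is routine by comparison, but I would carry it out explicitly to be sure no boundary row at $k=0$ contributes extra terms.
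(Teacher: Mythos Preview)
Your proposal is essentially the paper's proof: the paper too combines (\ref{eqn-diff-[n]pi}) with Lemma~\ref{lem-bound-h}, reads off the same sparse phase-$n$ rows of $\sqsub{n}\vc{P}-\vc{P}$ blockwise via (\ref{diff-(n)A_k})--(\ref{diff-(n)B_k}), bounds the at-most-six entries by $1$ and the test vector by its value at $(k+1,n+1)$ using monotonicity of $v$, arriving at the factor $6/c$. The step you flag as delicate is handled exactly as you anticipate: by scale-invariance the paper rewrites the supremum over $\{\vc{0}<\vc{g}\le c\vc{v}\}$ as one over $\{\vc{e}\le\vc{g}\le c\vc{v}\}$ (see (\ref{add-eqn-160214-01})), whereupon $\vc{\pi}\vc{g}\ge 1$ gives $(\vc{\pi}\vc{g}+1)/\vc{\pi}\vc{g}\le 2$ and the final constant $12/c$.
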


\begin{rem}
The quantity $\vc{\pi}\vc{g}$ is equal to the following time-averaged
functional w.p.1 (see, e.g., \cite[Chapter 3,
  Proposition~4.1]{Brem99}):
\[
\vc{\pi}\vc{g}
= \lim_{N \to \infty}{1 \over N}\sum_{\ell = 1}^N g(Z_1(\ell),Z_2(\ell))\quad \mbox{w.p.1}.
\]
Furthermore, since $|(\sqsub{n}\vc{\pi} - \vc{\pi})\vc{g} | \le |
\sqsub{n}\vc{\pi} - \vc{\pi} |\vc{g}$ for $\vc{0} < \vc{g} \le
c\vc{v}$, the bound (\ref{bound-E(n)}) yields a relative error bounds
for the approximate time-averaged functional
$\sqsub{n}\vc{\pi}\vc{g}$:
\[
\sup_{\vc{0} < \vc{g} \le c\vc{v}}
{|(\sqsub{n}\vc{\pi} - \vc{\pi})\vc{g} | \over \vc{\pi}\vc{g} }
\le E(n), \qquad n  \in \bbN.
\]
\end{rem}

{\sc Proof of Theorem~\ref{thm-f-ergodic}.~}
Let 
\[
\vc{w} = (w(k,i))_{(k,i)\in\bbS} =
(\vc{w}(0)^{\top},\vc{w}(1)^{\top},\dots)^{\top},
\]
where $\vc{w}(k) :=(w(k,i))_{i\in\bbZ_+}$ is given by
\begin{equation}
\vc{w}(k)
= (\vc{\pi} \vc{g} +1) 
\left(
\vc{v}(k) 
+ {b \over c}\vc{e}
\right),
\qquad k \in \bbZ_+.
\label{defn-w(k)}
\end{equation}
It then follows from (\ref{bound-Dg}) that
\begin{equation}
|\vc{D}|\,\vc{g} \le \vc{w}.
\label{ineqn-|D|g}
\end{equation}
Post-multiplying both sides of (\ref{eqn-diff-[n]pi}) by $\vc{g}$ and
substituting (\ref{defn-P}), (\ref{defn-[n]P}) and (\ref{ineqn-|D|g})
into the resulting inequality yields
\begin{eqnarray}
\left| \lsub{[n]}\vc{\pi} - \vc{\pi} \right| \vc{g}
&\le& 
\lsub{[n]}\vc{\pi}  \left| \lsub{[n]}\vc{P} - \vc{P} \right| \vc{w}
\nonumber
\\
&=&
\lsub{[n]}\vc{\pi}(0)
\sum_{\nu=0}^{1} \left|\sqsub{n}\vc{B}(\nu) - \vc{B}(\nu) \right| \vc{w}(\nu)
\nonumber
\\
&& {} 
+
\sum_{k=1}^{\infty} \lsub{[n]}\vc{\pi}(k)
\sum_{\nu=-1}^{1} \left|\sqsub{n}\vc{A}(\nu) - \vc{A}(\nu) \right| 
\vc{w}(k+\nu). \qquad
\label{ineqn-(n)pi_m-pi-01}
\end{eqnarray}

In what follows, we estimate the right hand
side of (\ref{ineqn-(n)pi_m-pi-01}). From (\ref{defn-A_k}), (\ref{defn-(n)A_k}), and (\ref{eqn-[n]A(k)=0}) we have, for $\nu=0,\pm 1$,
\begin{eqnarray}
\sqsub{n}\vc{A}(\nu) - \vc{A}(\nu)
&=& \bordermatrix{
  		&
0 		&
\cdots 	&
 n - 1	&
 n	 	&
n + 1	&
n + 2	&
\cdots
\cr
0		&
0 		& 
\cdots 	& 
0  		&
0 & 
0 & 
0 & 
\cdots
\cr
\,\vdots	&
\vdots 		&
\ddots 		&
\vdots 		&
\vdots 		&
\vdots 		&
\vdots		&
\ddots 						
\cr
n-1			&
0 			& 
\cdots  	& 
0  			&
0 			&
0 			& 
0 			& 
\cdots
\cr
n			&
0 			&
\cdots  	& 
0    		&
 p^{\bbE}(\nu,1) &
-p^{\bbE}(\nu,1) &
0 				&
\cdots 
\cr
n+1		&
  \ast 	&
\cdots 	&
  \ast 	&
  \ast 	&
  \ast 	&
  \ast 	&
\cdots
\cr
n+2		&
  \ast 	&
\cdots 	&
  \ast 	&
  \ast 	&
  \ast 	&
  \ast 	&
\cdots
\cr
\,\vdots	&
\vdots 	&
\ddots 	&
\vdots 	&
\vdots 	&
\vdots 	&
\vdots 	&
\ddots
}. \qquad
\label{diff-(n)A_k}
\end{eqnarray}
From (\ref{defn-B_k}) and (\ref{defn-(n)B_k}) and (\ref{eqn-[n]B(k)=0}), we also have, for $\nu=0,1$,
\begin{eqnarray}
\sqsub{n}\vc{B}(\nu) - \vc{B}(\nu)
&=& \bordermatrix{
  		&
0 		&
\cdots 	&
n - 1	&
n	 	&
n + 1	&
n + 2	&
\cdots
\cr
0		&
0 		& 
\cdots 	& 
0  		&
0 & 
0 & 
0 & 
\cdots
\cr
\,\vdots		&
\vdots 			&
\ddots 		&
\vdots       		&
\vdots 		&
\vdots 		&
\vdots		&
\ddots 						
\cr
n-1			&
0 		& 
\cdots 	& 
0  		&
0 			&
0 			& 
0 			& 
\cdots
\cr
n			&
0 			&
\cdots  	& 
0    		&
 p^{\{2\}}(\nu,1) &
-p^{\{2\}}(\nu,1) &
0 				&
\cdots 
\cr
n+1		&
  \ast 	&
\cdots 	&
  \ast 	&
  \ast 	&
  \ast 	&
  \ast 	&
\cdots
\cr
n+2		&
  \ast 	&
\cdots 	&
  \ast 	&
  \ast 	&
  \ast 	&
  \ast 	&
\cdots
\cr
\,\vdots	&
\vdots 	&
\ddots 	&
\vdots 	&
\vdots 	&
\vdots 	&
\vdots 	&
\ddots
}.\qquad~~~
\label{diff-(n)B_k}
\end{eqnarray}
Using (\ref{eqn-[n]pi(k)=0}), (\ref{diff-(n)A_k}) and (\ref{diff-(n)B_k}), we obtain
\begin{eqnarray}
\lefteqn{
\lsub{[n]}\vc{\pi}(k)
\sum_{\nu=-1}^{1} |\sqsub{n}\vc{A}(\nu) - \vc{A}(\nu) |\, \vc{w}(k+\nu)
}
\qquad && 
\nonumber
\\
&=&  \lsub{[n]}\pi(k,n)
\sum_{\nu=-1}^{1} p^{\bbE}(\nu,1) \sum_{j=n}^{n+1} w(k+\nu,j),
\qquad k \in \bbN,
\label{eqn-(n)pi-A-h}
\\
\lefteqn{
\lsub{[n]}\vc{\pi}(0)
\sum_{\nu=0}^{1} |\sqsub{n}\vc{B}(\nu) - \vc{B}(\nu) |\, \vc{w}(\nu)
}
 \qquad && 
\nonumber
\\
&=& \lsub{[n]}\pi(0,n)
\sum_{\nu=0}^{1}  p^{\{2\}}(\nu,1) \sum_{j=n}^{n+1} w(\nu,j).
\label{eqn-(n)pi-B-h}
\end{eqnarray}
Applying (\ref{eqn-(n)pi-A-h}) and
(\ref{eqn-(n)pi-B-h}) to (\ref{ineqn-(n)pi_m-pi-01}) leads to
\begin{eqnarray*}
\left| \lsub{[n]}\vc{\pi} - \vc{\pi} \right| \vc{g}
&\le&
\sqsub{n}\pi(0,n)
\sum_{\nu=0}^{1}  p^{\{2\}}(\nu,1)\sum_{j=n}^{n+1} w(\nu,j)
\nonumber
\\
&& {} 
+ 
\sum_{k=1}^{\infty} \sqsub{n}\pi(k,n)
\sum_{\nu=-1}^{1} p^{\bbE}(\nu,1)\sum_{j=n}^{n+1} w(k+\nu,j).
\end{eqnarray*}
Furthermore, substituting (\ref{defn-w(k)}) into the above inequality
yields
\begin{eqnarray}
{ \left| \lsub{[n]}\vc{\pi} - \vc{\pi} \right| \vc{g} \over \vc{\pi}\vc{g}+1 }
&\le&
\lsub{[n]}\pi(0,n)
\sum_{\nu=0}^{1}  p^{\{2\}}(\nu,1)
\sum_{j=n}^{n+1} 
\left( 
v(\nu,j) + {b \over c}
\right) 
\nonumber
\\
&& {} 
+ 
\sum_{k=1}^{\infty} \lsub{[n]}\pi(k,n)
\sum_{\nu=-1}^{1} p^{\bbE}(\nu,1)
\sum_{j=n}^{n+1} 
\left(
v(k+\nu,j) + {b \over c} 
\right)
\nonumber
\\
&=&
\lsub{[n]}\pi(0,n) r^{\{2\}}(n)
+ 
\sum_{k=1}^{\infty} \lsub{[n]}\pi(k,n) r^{\bbE}(k,n),
\label{ineqn-|pi-[n]pi|g}
\end{eqnarray}
where 
\begin{align*}
r^{\{2\}}(n)
&= \sum_{\nu=0}^{1}  p^{\{2\}}(\nu,1)
\sum_{j=n}^{n+1} 
\left\{ 
v(\nu,j) + {b \over c} 
\right\},& n &\in \bbN,
\\
r^{\bbE}(k,n)
&= \sum_{\nu=-1}^{1} p^{\bbE}(\nu,1)
\sum_{j=n}^{n+1} 
\left\{
v(k+\nu,j) + {b \over c}
\right\}, & k,n & \in\bbN.
\end{align*}
Note here that $p^{\{2\}}(\nu,1) \le 1$ for $\nu\in\{0,1\}$,
$p^{\bbE}(\nu,1) \le 1$ for $\nu \in \{0,\pm1\}$ and $v(\vc{n}) \le
v(\vc{m})$ for $\vc{0} \le \vc{n} \le \vc{m}$. Thus, we have
\begin{align}
r^{\{2\}}(n)
&\le 4 \left\{ v(1,n+1) + {b \over c}\right\}, & n & \in \bbN,
\label{bound-r^{2}(n)}
\\
r^{\bbE}(k,n)
&\le 6 \left\{ v(k+1,n+1) + {b \over c} \right\}, & k,n & \in \bbN.
\label{bound-r^{D}(k,n)}
\end{align}
From (\ref{ineqn-|pi-[n]pi|g}), (\ref{bound-r^{2}(n)}) and
(\ref{bound-r^{D}(k,n)}), we obtain
\begin{eqnarray}
{ 
\left| \lsub{[n]}\vc{\pi} - \vc{\pi} \right| \vc{g} 
\over
\vc{\pi}\vc{g}+1 
} 
&\le& 4 \lsub{[n]}\pi(0,n) \! 
 \left\{ v(1,n+1) + {b \over c} \right\} 
\nonumber 
\\ 
&& {} + 6
\sum_{k=1}^{\infty}\lsub{[n]}\pi(k,n)\!  
\left\{ v(k+1,n+1) + {b \over   c} \right\} 
\nonumber 
\\ 
&\le& 6
\sum_{k=0}^{\infty}\lsub{[n]}\pi(k,n)\!  
\left\{ v(k+1,n+1) + {b \over   c} \right\} 
\nonumber 
\\ 
&=& {6 \over c}
\sum_{k=0}^{\infty}\lsub{[n]}\pi(k,n) \!  
\left\{
\rme^{\theta_1+\theta_2} \rme^{k\theta_1+n\theta_2} + b
\right\},\qquad~~
\label{add-eqn-160625-01}
\end{eqnarray}
we use (\ref{defn-v}) in the last equality.

We now note that
\begin{eqnarray}
\sup_{\vc{0} < \vc{g} \le c\vc{v}}
{ \left| \lsub{[n]}\vc{\pi} - \vc{\pi} \right| \vc{g} 
\over \vc{\pi}\vc{g} }
&=&
\sup_{
\scriptstyle 0 < \varepsilon \le 1 
\atop 
\scriptstyle \varepsilon \vc{e} \le \vc{g} \le \varepsilon (c \vc{v})
}
{ \left| \lsub{[n]}\vc{\pi} - \vc{\pi} \right| (\vc{g}/\varepsilon) 
\over \vc{\pi}(\vc{g}/\varepsilon) }
=
\sup_{
\vc{e} \le \vc{g} \le c\vc{v}
}
{ \left| \lsub{[n]}\vc{\pi} - \vc{\pi} \right| \vc{g} 
\over \vc{\pi}\vc{g} },
\label{add-eqn-160214-01}
\end{eqnarray}
and that
\begin{equation}
\sup_{
\vc{e} \le \vc{g} \le c\vc{v}
}
{ \vc{\pi}\vc{g} + 1 \over \vc{\pi}\vc{g} } \le 2.
\label{add-eqn-160327-01}
\end{equation}
Combining (\ref{add-eqn-160625-01}), (\ref{add-eqn-160214-01}) and
(\ref{add-eqn-160327-01}) results in (\ref{bound-E(n)}). \qed
\medskip

In the rest of this section, we simplify the bound (\ref{bound-E(n)}). To this end, we still
suppose that all the conditions of Lemma~\ref{lem:v-02} are satisfied. Since $\mathsf{\Gamma}^{\bbE}$,
$\mathsf{\Gamma}^{\{1\}}$ and $\mathsf{\Gamma}^{\{2\}}$ are open
sets, there exists $\wt{\vc{\theta}}:= (\wt{\theta}_1,\wt{\theta}_2) \in \bigcap_{\bbA \in 2_+^{\bbE}} \mathsf{\Gamma}^{\bbA} \cap \bbR_{>0}^2$ such that $\wt{\vc{\theta}} > \vc{\theta}$, where $\vc{\theta}$ is the vector appearing in Lemma~\ref{lem:v-02}. Using such a vector $\wt{\vc{\theta}}$, we can obtain a weaker but
simpler error bound.
\begin{thm}
Suppose that all the conditions of Lemma~\ref{lem:v-02} are
satisfied. Furthermore, fix $\wt{\vc{\theta}} \in \bigcap_{\bbA \in 2_+^{\bbE}} \mathsf{\Gamma}^{\bbA} \cap \bbR_{>0}^2$ such that
$\wt{\vc{\theta}} > \vc{\theta}$. We then have
\begin{eqnarray}
\sup_{\vc{0} < \vc{g} \le c\vc{v}}
{ \left| \sqsub{n}\vc{\pi} - \vc{\pi} \right| \vc{g} \over \vc{\pi}\vc{g} }
\le \wt{E}(n), \qquad n  \in \bbN,
\label{bound-wt{E}(n)}
\end{eqnarray}
where
\begin{eqnarray}
\wt{E}(n)
&=& {12 \wt{b} \over c}
\left[
{
\rme^{\theta_1+\theta_2}  \rme^{-n(\wt{\theta}_2 - \theta_2)}
\over 
1 - \rme^{-(\wt{\theta}_1 - \theta_1)}
}
+ 
{
b\rme^{-n\wt{\theta}_2}
\over 
1 - \rme^{-\wt{\theta}_1}
}
\right], \qquad
\label{defn-wt{E}(n)}
\end{eqnarray}
with
\begin{eqnarray}
\wt{c} 
&=& 1 - 
\max(
\gamma^{\{1\}}(\vc{\wt{\theta}}),
\gamma^{\{2\}}(\vc{\wt{\theta}}),
\gamma^{\bbE}(\wt{\vc{\theta}})
),
\label{eqn:defn-wt{c}}
\\
\wt{b} &=& 1 + \wt{c}\,^{-1} ( \gamma^{\emptyset}(\wt{\vc{\theta}}) - 1 ).
\end{eqnarray}
\end{thm}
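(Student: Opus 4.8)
The plan is to start from the bound $E(n)$ already secured in Theorem~\ref{thm-f-ergodic} and to remove the dependence on the QBD quantities $\lsub{[n]}\pi(k,n)$ in (\ref{defn-E(n)}) by invoking a \emph{second} geometric drift condition, now built from the larger exponent $\wt{\vc{\theta}}$. Splitting (\ref{defn-E(n)}) gives
\[
E(n) = {12 \over c}\left[ \rme^{\theta_1+\theta_2}\sum_{k=0}^{\infty}\lsub{[n]}\pi(k,n)\,\rme^{k\theta_1+n\theta_2} + b\sum_{k=0}^{\infty}\lsub{[n]}\pi(k,n)\right],
\]
so it suffices to obtain a pointwise bound on $\lsub{[n]}\pi(k,n)$ that decays in $k$ faster than $\rme^{-k\theta_1}$. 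The slack $\wt\theta_1-\theta_1>0$ is precisely what will make the resulting series summable and produce the factors $1/(1-\rme^{-(\wt\theta_1-\theta_1)})$ and $1/(1-\rme^{-\wt\theta_1})$ appearing in (\ref{defn-wt{E}(n)}).

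First I would set $\wt v(\vc{n}) = \wt c^{\,-1}\rme^{\br{\wt{\vc{\theta}},\vc{n}}}$ and show that the QBD approximation $\lsub{[n]}\vc{P}$ also satisfies Condition~\ref{cond:assumpt-f-ergodic}, with this $\wt v$, the constants $\wt c,\wt b$ given in the statement (see (\ref{eqn:defn-wt{c}})), and $\bbK=\{(0,0)\}$. The computation mirrors the proof of Lemma~\ref{lem:v-02}: at every state whose phase is strictly below $n$ the truncation in (\ref{defn-[n]Z_2(l)}) is inactive, so the one-step average of $\wt v$ equals $\wt v\,\gamma^{\bbA}(\wt{\vc{\theta}})\le(1-\wt c)\wt v$ for $\bbA\in 2_+^{\bbE}$ exactly as before; at a state of phase $n$ the map $\min(n,\cdot)$ replaces an upward phase increment by a smaller phase, and since $\wt\theta_2>0$ makes $\wt v$ increasing in the phase, the truncated one-step average is no larger than the untruncated one, so the same inequality persists. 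Premultiplying the resulting drift inequality by $\lsub{[n]}\vc{\pi}$ and using $\lsub{[n]}\vc{\pi}\,\lsub{[n]}\vc{P}=\lsub{[n]}\vc{\pi}$ yields $\lsub{[n]}\vc{\pi}\,\wt v\le \wt b/\wt c$, exactly as (\ref{ineqn-pi*v}) was obtained for $\vc{\pi}$. Because every summand is nonnegative, this gives the termwise estimate $\lsub{[n]}\pi(k,n)\le \wt b\,\rme^{-k\wt\theta_1-n\wt\theta_2}$ for all $k\in\bbZ_+$.

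Finally I would substitute this estimate into the two sums displayed above. The first becomes $\rme^{\theta_1+\theta_2}\wt b\,\rme^{-n(\wt\theta_2-\theta_2)}\sum_{k\ge0}\rme^{-k(\wt\theta_1-\theta_1)}$ and the second $b\,\wt b\,\rme^{-n\wt\theta_2}\sum_{k\ge0}\rme^{-k\wt\theta_1}$; both geometric series converge since $\wt\theta_1>\theta_1>0$, and evaluating them reproduces $\wt E(n)$ in (\ref{defn-wt{E}(n)}). Combining $E(n)\le\wt E(n)$ with the conclusion (\ref{bound-E(n)}) of Theorem~\ref{thm-f-ergodic} delivers (\ref{bound-wt{E}(n)}).

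The main obstacle is the verification that $\lsub{[n]}\vc{P}$ inherits the geometric drift condition, that is, the boundary bookkeeping at phase $n$. One must check each face separately: the corner $(0,0)$, where no truncation occurs since $n\ge1$ and the $\wt b\,\vc{1}_{\{(0,0)\}}$ term reappears unchanged; the level axis $\bbS^{\{1\}}$, where $X_2^{\{1\}}\ge0$ keeps the transition $0\to1$ within range; the face $\bbS^{\{2\}}$ at phase $n$; and the interior states $(k,n)$ with $k\ge1$ --- confirming in each case that monotonicity of $\wt v$ in the phase renders $\min(n,\cdot)$ harmless. The remaining ingredients, namely $\wt c>0$ (from $\wt{\vc{\theta}}\in\bigcap_{\bbA\in 2_+^{\bbE}}\mathsf{\Gamma}^{\bbA}$) and $\wt b>0$ (from $\wt{\vc{\theta}}>\vc{0}$ together with $\vc{X}^{\emptyset}\ge\vc{0}$), are immediate copies of the corresponding facts in Lemma~\ref{lem:v-02}.
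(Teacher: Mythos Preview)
Your proposal is correct and follows essentially the same route as the paper. The paper proceeds by first establishing the drift inequality $\vc{P}\wt{\vc{v}}-\wt{\vc{v}}\le-\wt c\,\wt{\vc{v}}+\wt b\,\vc{1}_{\{\vc{0}\}}$ for $\vc{P}$ (as in Lemma~\ref{lem:v-02}) and then observing in one line that $\lsub{[n]}\vc{P}\wt{\vc{v}}\le\vc{P}\wt{\vc{v}}$ by monotonicity of $\wt v$; your case-by-case boundary analysis at phase $n$ is simply a more explicit version of this same monotonicity argument, and the remaining steps (premultiplying by $\lsub{[n]}\vc{\pi}$, extracting the pointwise bound on $\lsub{[n]}\pi(k,n)$, and summing the two geometric series) coincide with the paper's.
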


\begin{proof}
We prove that $E(n) \le \wt{E}(n)$ for $n \in \bbN$.  Let
$\wt{\vc{v}}:=(\wt{v}(\vc{n}))_{\vc{n} \in \bbS}$ denote
\begin{align}
\wt{v}(\vc{n}) = \wt{c}\,^{-1}\exp\{\br{\wt{\vc{\theta}},\vc{n}}\}, \qquad 
\vc{n} \in \bbS.
\label{defn-wt{v}}
\end{align} 
Proceeding as in the proof of Lemma~\ref{lem:v-02}, we can readily
prove that
\begin{equation}
\vc{P}\wt{\vc{v}} - \wt{\vc{v}}
\le - \wt{c} \wt{\vc{v}} + \wt{b} \vc{1}_{\{\vc{0}\}}.
\label{ineqn-Pwt{v}}
\end{equation}
Since $\wt{v}(\vc{n}) \le \wt{v}(\vc{m})$ for $\vc{0} \le \vc{n} \le
\vc{m}$, it follows from (\ref{defn-P}) and (\ref{defn-[n]P}) that
$\lsub{[n]}\vc{P}\wt{\vc{v}} \le \vc{P}\wt{\vc{v}}$. Using this
inequality and (\ref{ineqn-Pwt{v}}), we have
\begin{equation}
\lsub{[n]}\vc{P}\wt{\vc{v}} - \wt{\vc{v}}
\le - \wt{c} \wt{\vc{v}} + \wt{b} \vc{1}_{\{\vc{0}\}}.
\label{ineqn-[n]Pwt{v}}
\end{equation}
Pre-multiplying both sides of (\ref{ineqn-[n]Pwt{v}}) by
$\lsub{[n]}\vc{\pi}$ and using $\lsub{[n]}\vc{\pi}\lsub{[n]}\vc{P} =
\lsub{[n]}\vc{\pi}$ yields
\begin{eqnarray}
\lsub{[n]}\vc{\pi} \wt{\vc{v}} \le \wt{b}/\wt{c}.
\label{inreqn-[n]pi-wt{v}}
\end{eqnarray}
Combining (\ref{inreqn-[n]pi-wt{v}}) with (\ref{defn-wt{v}}),
we obtain
\begin{eqnarray}
\lsub{[n]}\pi(k,n) \le {\wt{b} \over \wt{c} \wt{v}(k,n)}
&=& \wt{b} \rme^{-k\wt{\theta}_1 - n\wt{\theta}_2}.
\label{ineqn-[n]pi-01}
\end{eqnarray}
Substituting (\ref{ineqn-[n]pi-01}) into
(\ref{defn-E(n)}) results in 
\begin{eqnarray*}
E(n) &\le& 
{12 \wt{b} \over c} 
\Bigg[ \rme^{\theta_1+\theta_2}
\sum_{k=0}^{\infty}
\rme^{-k(\wt{\theta}_1 - \theta_1) - n (\wt{\theta}_2 - \theta_2)}
 + b
\sum_{k=0}^{\infty}\rme^{-k\wt{\theta}_1 - n\wt{\theta}_2}
\Bigg]
\nonumber
\\
&=& {12 \wt{b} \over c}
\left[
{
\rme^{\theta_1+\theta_2}  \rme^{-n(\wt{\theta}_2 - \theta_2)}
\over 
1 - \rme^{-(\wt{\theta}_1 - \theta_1)}
}
+ 
{
b\rme^{-n\wt{\theta}_2}
\over 
1 - \rme^{-\wt{\theta}_1}
}
\right]
= \wt{E}(n),
\end{eqnarray*}
where the last equality holds due to (\ref{defn-wt{E}(n)}).
\end{proof}

\section{Conclusions}
 
In this paper, we have considered a two-dimensional reflecting random
walk. Under some technical conditions, we have derived simple relative
error bounds for the approximate time-averaged functional by the QBD
approximation. A typical example of 2D-RRWs satisfying the technical conditions is a two-node Jackson network with
cooperative servers. Since the technical conditions are somewhat
restrictive, we will try to remove them, as part of our future work.

\appendix

\section{Two-node Jackson network with cooperative servers}\label{appendix-Jackson}

We introduce a two-node Jackson network with cooperative servers, and
show that its two-dimensional queueing process is a 2D-RRW satisfying
Assumptions \assn{stability} and \ref{assumpt-negative-drift}.

We consider a queueing network with two nodes, numbered $1$ and
$2$. We also refer to the server at node $i \in \{1,2\}$ as server
$i$.  We then assume that customers arrive at node $i$ according to a
Poisson process with rate $\lambda_i$. The processing time required by
server $i$ to complete the service of a customer is distributed with
an exponential distribution having mean $1/\sigma_i$, which is
independent of all the other events. In addition, we assume that each
server helps the other one while its node has no jobs. More
specifically, while node $i$ is not empty and node $3-i$ is empty, the
customers at node $i$ are served by both servers and thus their
service times are independent and identically distributed with an
exponential distribution having mean $1/(\sigma_1 +
\sigma_2)$. Finally, we assume that, when each customer in node $i$
finishes its service, it goes to node $3-i$ with probability $q_i$ or
leaves the network with probability $1 - q_i$, where $0 < q_i < 1$.

It is easy to see that the two-node queueing network described above
is a Jackson network (see, e.g., \cite{Koba14}). We refer to this
Jackson network as {\it the two-node Jackson network with cooperative
  servers}.  It is also known (see, e.g., \cite{Koba14}) that the
two-node Jackson network with cooperative servers is stable if and
only if
\begin{align}
\label{eqn:stability_Jackson}
\rho_{1} 
:= \frac{\lambda_{1} + \lambda_{2} q_{2}}{\sigma_{1}(1 - q_{1}q_{2})} < 1, \quad 
\rho_{2} := \frac{\lambda_{2} + \lambda_{1} q_{1}}{\sigma_{2}(1 - q_{1}q_{2})} < 1.
\end{align}

We now assume, without loss of generality, that $\lambda_{1} + \lambda_{2}
+ \sigma_{1} + \sigma_{2} = 1$.
Using the uniformization technique (see, e.g.,
\cite[Section~4.5.2]{Tijm03}), the two-node Jackson network with
cooperative servers is formulated as a 2D-RRW, and its transition
probability laws $p^{\bbA}$'s, $\bbA \in 2^{\bbE}$, satisfy the
following equations:
\begin{align*}
p^{\mathbb{A}}(1,0) &= \lambda_1, \quad 
p^{\mathbb{A}}(0,1) = \lambda_2,  \qquad \quad\, 
\bbA \in 2^{\bbE},
\\
p^{\mathbb{A}}(1,1) &= p^{\mathbb{A}}(-1,-1) = 0, \qquad\qquad~~  
\bbA  \in 2^{\bbE},
\\
p^{\emptyset}(0,0) &= \sigma_1 + \sigma_2, \quad
p^{\mathbb{A}}(0,0) = 0, \quad~~   
\bbA  \in 2_+^{\bbE},
\\
p^{\mathbb{E}}(-1,1) 
&= \sigma_{1} q_{1}, \quad~~\, p^{\mathbb{E}}(1,-1) 
= \sigma_{2} q_{2}, 
\\
p^{\mathbb{E}}(-1,0) 
&= \sigma_{1}(1 - q_{1}), \quad~~~ 
p^{\mathbb{E}}(0,-1) 
= \sigma_{2}(1 -q_{2}),
\\
p^{\{1\}}(-1,1) 
&= (\sigma_{1} + \sigma_{2})q_{1}, 
~~
p^{\{2\}}(1,-1) 
= (\sigma_{1} + \sigma_{2})q_{2},
\\
p^{\{1\}}(-1,0) 
&= (\sigma_{1} + \sigma_{2})(1 - q_{1}),
\\ 
p^{\{2\}}(0,-1) 
&= (\sigma_{1} + \sigma_{2})(1 - q_{2}).
\end{align*}
From these transition probability laws, we obtain
\begin{align*}
&\mu_{1}^{\mathbb{E}} 
= \lambda_{1} 
+ \sigma_{2} q_{2} - \sigma_{1},
&
&\mu_{2}^{\mathbb{E}} 
= \lambda_{2} 
+ \sigma_{1} q_{1} - \sigma_{2},
\\
& \mu_{1}^{\{1\}} 
= \lambda_{1} - \sigma_{1} - \sigma_{2},
&
& \mu_{2}^{\{1\}} 
= \lambda_{2}  + (\sigma_{1} 
+ \sigma_{2})q_{1},
\\
& \mu_{1}^{\{2\}} 
= \lambda_{1}  + (\sigma_{1} 
+ \sigma_{2})q_{2},
&
& \mu_{2}^{\{2\}} 
= \lambda_{2} -  \sigma_{1} - \sigma_{2}.
\end{align*}
We can see that the condition \eqn{stability_Jackson} is satisfied if
and only if \asst{stability} holds. Furthermore, using
\eqn{stability_Jackson}, we can readily confirm that the present
2D-RRW satisfies Assumption \ref{assumpt-negative-drift}.



%
%
%

\end{document}